\documentclass[11pt]{amsart}
\topmargin 0mm
\evensidemargin  15mm
\oddsidemargin  15mm
\textwidth  140mm
\textheight 220mm

\usepackage{lmodern} % math, rm, ss, tt
\usepackage[T1]{fontenc}

\usepackage{amsfonts,dsfont}
\usepackage{pstricks}
\usepackage{epsfig}
\usepackage{amsmath}
\usepackage{amssymb}
\sloppy	
\pagestyle{plain}

\theoremstyle{plain}
\newtheorem{theorem}{Theorem} [section]
\newtheorem*{theorem*}{Theorem}
\newtheorem{proposition} [theorem] {Proposition}

\newtheorem{lemma} [theorem] {Lemma}

\theoremstyle{definition}

\theoremstyle{remark}
\newtheorem{remark} [theorem] {Remark}

\def \r{\mbox{${\mathbb R}$}}
\def \h{\mathbb H}
\def \e{\mbox{${\mathbb E}$}}
\def \s{{\mathbb S}}
\def \l{{\mathbb L}}

\DeclareMathOperator{\grad}{grad}
\DeclareMathOperator{\trace}{trace}
\DeclareMathOperator{\cst}{constant}
\DeclareMathOperator{\Div}{div}
\DeclareMathOperator{\ricci}{Ricci}

%\dedicatory{Dedicated to the memory of Professor Aldo Cossu}
\setlength{\parindent}{0in}

\begin{document}

\begin{abstract}
We introduce the notion of {\em biconservative} hypersurfaces, that is hypersurfaces with conservative  {\it stress-energy}  
tensor with respect to the bienergy. We give the (local) classification of biconservative surfaces in 3-dimensional space forms.
\end{abstract}

\title{Surfaces in three-dimensional space forms with divergence-free stress-bienergy tensor}

\author{R.~Caddeo}
\address{Universit\`a degli Studi di Cagliari\\
Dipartimento di Matematica e Informatica\\
Via Ospedale 72\\
09124 Cagliari, ITALIA}
\email{caddeo@unica.it}
\email{montaldo@unica.it}
\email{piu@unica.it}
\author{S.~Montaldo}

\author{C.~Oniciuc}
\address{Faculty of Mathematics\\ ``Al.I. Cuza'' University of Iasi\\
Bd. Carol I no. 11 \\
700506 Iasi, ROMANIA}
\email{oniciucc@uaic.ro}

\author{P.~Piu}

\thanks{The third author was supported  by a grant of the Romanian National Authority for
Scientific Research, CNCS -- UEFISCDI, project number PN-II-RU-TE-2011-3-0108; and by Regione Autonoma della Sardegna, Visiting Professor Program.}

\subjclass[2000]{58E20,53A05,53C42}
\keywords{Biharmonic maps. Stress-energy tensor. Space forms}

\maketitle

\section{Introduction}

A hypersurface $M^m$ in an $(m+1)$-dimensional Riemannian manifold $N^{m+1}$ is called {\it biconservative} if
\begin{equation}\label{eq-def-biconservative-space}
2 A(\grad f)+ f \grad f=2 f \ricci^N(\eta)^{\top}\,,
\end{equation}
where $A$ is the shape operator, $f=\trace A$ is the mean curvature function and $\ricci^N(\eta)^{\top}$ is the tangent  component of the Ricci curvature of $N$ in the direction of the unit normal $\eta$ of $M$ in $N$.

The name biconservative, as we shall describe in Section~\ref{sec:stress-energy-tensor}, comes from the fact that condition \eqref{eq-def-biconservative-space} is equivalent to the conservativeness   of a certain {\it stress-energy}  tensor $S_2$, that is $\Div S_2=0$ if and only if the  hypersurface  is biconservative. The tensor $S_2$ is associated to the bienergy functional. In general, a submanifold is called {\em biconservative} if $\Div S_2=0$.

Moreover, the class of biconservative submanifolds includes that of biharmonic  submanifolds, which have been of large interest in the last decade (see, for example, \cite{BMO12,BMO10a,BMO10,BMO08,CI91,O10,OW11}). Biharmonic submanifolds are characterized by the vanishing of the bitension field and they represent a generalization of harmonic (minimal) submanifolds. In fact, as detailed in Section~\ref{sec:stress-energy-tensor},  a submanifold is biconservative if the tangent part of the bitension field vanishes.
It is worth to point out that, thinking at the energy functional instead of the bienergy functional, the notion of {\em conservative} submanifolds  is not useful as all submanifolds are conservative (see Remark~\ref{remark:conservative}).
We also would like to point out that submanifolds with vanishing  tangent part of the bitension field have been considered by Sasahara in \cite{S2012} where he studied certain 3-dimensional submanifolds in $\r^6$. 

In this paper we consider biconservative surfaces in a 3-dimensional space form $N^3(c)$ of constant sectional curvature $c$. In this case \eqref{eq-def-biconservative-space} becomes
\begin{equation}\label{eq-def-biconservative-space-form}
2 A(\grad f)+ f \grad f=0\,.
\end{equation}
From \eqref{eq-def-biconservative-space-form} we see that CMC surfaces, i.e. surfaces with constant mean curvature, in space forms are biconservative.  Thus our interest will be on NON CMC biconservative surfaces. 

As a general fact, we first prove that the mean curvature function $f$ of a biconservative surface in a 3-dimensional space form satisfies the following PDE
$$
f \Delta f+|\grad f|^2-\frac{16}{9} K(K-c)=0\,,
$$
where $K$ denotes the Gauss curvature of the surface, while $\Delta$ is the Laplace-Beltrami operator on $M$.

Then the paper is completely devoted to the local classification of biconservative surfaces in 3-dimensional space forms. This is done in three sections where we examine, separately, the cases of: surfaces in the 3-dimensional euclidean space; surfaces in the  3-dimensional sphere; surfaces in the  3-dimensional hyperbolic space.
\vspace{2mm}

For biconservative surfaces in $\r^3$, we shall reprove a result of Hasanis and Vlachos contained in \cite{HV95}, where they call $H$-surfaces the biconservative surfaces.\vspace{2mm}

\noindent {\bf Theorem}~{\bf \ref{teo:r3general}.}
{\em Let $M^2$ be a biconservative surface in $\r^3$ with $f(p)>0$ and $\grad f(p)\neq 0$ for any $p\in M$. Then, locally, $M^2$ is a surface of revolution.}
\vspace{2mm}

In fact, we give the explicit parametrization of the profile curve of a biconservative surface of  revolution (see Proposition~\ref{pro-surf-revolution}), which is not in \cite{HV95}. In their paper, the authors also studied the case of biconservative hypersurfaces in $\r^4$ obtaining a similar result to Theorem~\ref{teo:r3general}.

%We should mention that the classification of  biconservative surfaces in $\r^3$ have been achieved previously by Hasanis and Vlachos, in \cite{HV95}, where they call these surfaces $H$-surfaces. In their paper the also studied the case of biconservative hypersurfaces in $\r^4$ obtaining a similar result to Theorem~\ref{teo:r3general}.
 
Our approach is slightly different and allow us to go further and classify the
biconservative surfaces in $\s^3$ and in $\h^3$. Moreover,  the notion of biconservative submanifolds is more general than the notion of $H$-hypersurfaces in $\r^n$.
\vspace{2mm}

Considering $\s^3$ as a submanifold of  $\r^4$, the biconservative surfaces in $\s^3$ are characterized by the following\vspace{2mm}

\noindent {\bf Theorem}~{\bf \ref{teo:class-bicons-s3}.}
{\em Let $M^2$ be a biconservative surface in $\s^3$ with $f(p)>0$ and $\grad f(p)\neq 0$ at any point $p\in M$. Then, 
 locally, $M^2\subset \r^4$ can be parametrized by
$$
X_C(u,v)=\sigma(u)+\frac{4}{3 \sqrt{C} k(u)^{3/4}}\big(C_1 (\cos v-1) +C_2 \sin v \big),
$$
where  $C$ is a positive constant of integration, $C_1,C_2\in\r^4$ are two constant orthonormal vectors such that
$$
\langle \sigma(u), C_1\rangle =\frac{4}{3 \sqrt{C} k(u)^{3/4}}\,,\quad \langle \sigma(u), C_2\rangle=0\,,
$$
while $\sigma=\sigma(u)$ is a curve lying in the totally geodesic $\s^2=\s^3\cap\Pi$ ($\Pi$ the linear hyperspace of $\r^4$ orthogonal to $C_2$), whose geodesic  curvature $k=k(u)$ is a positive non constant solution of
the following ODE
$$
k'' k =\frac{7}{4} (k')^2+\frac{4}{3} k^2-4 k^4\,.
$$
}
\vspace{2mm}

Geometrically Theorem~\ref{teo:class-bicons-s3} means that, locally, the surface $M^2$ is  given by a family of circles of $\r^4$, passing through the curve $\sigma$,  and belonging to a pencil of planes which are parallel to the linear space spanned by $C_1$ and $C_2$. Now, these circles must be the intersection of the pencil with the sphere $\s^3$. Let $G$ be the 1-parameter group of isometries of $\r^4$ generated by the Killing vector field
$$
T=\langle {\mathbf r}, C_2\rangle C_1+\langle  {\mathbf r}, C_1\rangle C_2\,,
$$ 
where ${\mathbf r}$ represents  the position vector of a point in $\r^4$.
Then $G$ acts also on $\s^3$ by isometries and it can be identified with the group $SO(2)$. Since the orbits
of $G$ are circles of $\s^3$ we deduce that $X(u,v)$, in Theorem~\ref{teo:class-bicons-s3}, describes an $SO(2)$ invariant 
surface of $\s^3$ obtained by the action of $G$ on the curve $\sigma$. Moreover, as we shall explain in Remark~\ref{remark-existences3}, 
there exist solutions of the ODE in Theorem~\ref{teo:class-bicons-s3} for the corresponding profile curve $\sigma$.  Although we are not able to give explicit solutions 
for $\sigma$, as we have done for the biconservative surfaces in $\r^3$, using Mathematica we give a plot of a numerical solution of the ODE in Theorem~\ref{teo:class-bicons-s3}, which describes the behavior of the curvature of $\sigma$. 
  \vspace{2mm}

Let consider the following model for the hyperbolic space
$$
\h^3 = \{(x_1,x_2,x_3,x_4)\in\l^4\,:\, x_1^2 +x_2^2 +x_3^2 -x_4^2 = -1,\, x_4 > 0\},
$$
where  $\l^4$ is the 4-dimensional Lorentz-Minkowski space. Then we have the following description of biconservative surfaces in $\h^3$. \vspace{2mm}

\noindent {\bf Theorem}~{\bf \ref{teo:class-bicons-h3}.}
{\em Let $M^2$ be a biconservative surface in $\h^3$ with $f(p)>0$ and $\grad f(p)\neq 0$ at any point $p\in M$. Put
$W={9|\grad f|^2}/({16f^2})+{9f^2}/{4}-1$.  Then, 
 locally, $M^2\subset \l^4$ can be parametrized by:
 \begin{enumerate}
 \item[(a)] if $W>0$
$$
X_C(u,v)=\sigma(u)+\frac{4}{3 \sqrt{C} k(u)^{3/4}}\big(C_1 (\cos v-1) +C_2 \sin v \big),
$$
where  $C$ is a positive constant of integration, $C_1,C_2\in\l^4$ are two constant vectors such that
$$
\langle C_i,C_j\rangle =\delta_{ij}\,,\quad \langle \sigma(u), C_1\rangle =\frac{4}{3 \sqrt{C} k(u)^{3/4}}\,,\quad \langle \sigma(u), C_2\rangle=0\,,
$$
while $\sigma=\sigma(u)$ is a curve lying in the totally geodesic $\h^2=\h^3\cap\Pi$ ($\Pi$ the linear hyperspace of $\l^4$ defined by $\langle{\mathbf r},C_2\rangle=0$), whose geodesic  curvature $k=k(u)$ is a positive non constant solution of
the following ODE
$$
k'' k =\frac{7}{4} (k')^2-\frac{4}{3} k^2-4 k^4\,.
$$
 \item[(b)]  if $W<0$
 $$
X_C(u,v)=\sigma(u)+\frac{4}{3 \sqrt{-C} k(u)^{3/4}}\big(C_1 (e^v-1) +C_2 (e^{-v} -1)\big),
$$
where  $C$ is a negative constant of integration, $C_1,C_2\in\l^4$ are two constant vectors such that
$$
\langle C_i,C_i\rangle =0\,,\quad\langle C_1,C_2\rangle =-1\,,\quad \langle \sigma(u), C_1\rangle =\langle \sigma(u), C_2\rangle=-\frac{2\sqrt{2}}{3 \sqrt{-C} k(u)^{3/4}}\,,
$$
while $\sigma=\sigma(u)$ is a curve lying in the totally geodesic $\h^2=\h^3\cap\Pi$ ($\Pi$ the linear hyperspace of $\l^4$ orthogonal to $C_1-C_2$), whose geodesic  curvature $k=k(u)$ is a positive non constant solution of the same ODE in {\rm (a)}.
\end{enumerate}
}
\vspace{2mm}
We note that a surface in a 3-dimensional space form for which both tangent and normal part of its bitension field vanish, i.e. a biharmonic surface, must be CMC (see \cite{RCSMCO2,C91}). Therefore, the assumption  that only the tangent part of the bitension field vanishes does not imply that the surface is CMC.
\vspace{2mm}

{\bf Conventions.}
Throughout this paper all manifolds, metrics, maps are assumed to be smooth, i.e. 
of class $C^\infty$. All manifolds are assumed to be connected. The following sign conventions are used
$$
\Delta^\varphi V=-\trace\nabla^2 V\,,\qquad R^N(X,Y)=[\nabla_X,\nabla_Y]-\nabla_{[X,Y]},
$$
where $V\in C(\varphi^{-1}(TN))$ and $X,Y\in C(TN)$.

By a {\it submanifold} $M$ in a Riemannian manifold $(N,h)$ we mean an isometric immersion $\varphi :M\to (N,h)$.
\vspace{2mm}

{\bf Acknowledgement.} The authors would like to thank Ye-Lin Ou  for reading a first draft of the paper and making some helpful suggestions.

\section{Biharmonic maps and the stress-energy tensor}\label{sec:stress-energy-tensor}

As described by Hilbert in~\cite{DH}, the {\it stress-energy}
tensor associated to a variational problem is a symmetric
$2$-covariant tensor $S$ conservative at critical points,
i.e. with $\Div S=0$.

In the context of harmonic maps $\varphi:(M,g)\to (N,h)$ between two Riemannian manifolds, that by definition are critical points of the
energy
$$
E(\varphi)=\frac{1}{2}\int_M\vert d\varphi\vert^2 \ v_g,
$$
the stress-energy tensor was studied in detail by
Baird and Eells in~\cite{PBJE} and Sanini in \cite{AS}. Indeed, the Euler-Lagrange
equation associated to the energy is equivalent to the vanishing of the tension
field $\tau(\varphi)=\trace\nabla d\varphi$ (see \cite{ES}), and the tensor
$$
S=\frac{1}{2}\vert d\varphi\vert^2 g - \varphi^{\ast}h
$$
satisfies $\Div S=-\langle\tau(\varphi),d\varphi\rangle$. Therefore, $\Div S=0$ when the map is harmonic.

\begin{remark}\label{remark:conservative}
We point out that, in the case of isometric immersions, the condition $\Div S=0$ is always satisfied,
 since $\tau(\varphi)$ is normal.
\end{remark}
%As shown by Sanini in~\cite{AS}, $S$ vanishes precisely at
%critical points of the energy for variations of the domain metric,
%rather than variations of the map.

A natural generalization of harmonic
maps, first proposed in \cite{EL}, can be obtained considering  the  {\it
bienergy} of $\varphi:(M,g)\to (N,h)$ which is defined by
$$
E_2(\varphi)=\frac{1}{2}\int_M\vert\tau(\varphi)\vert^2 \ v_g.
$$
The map $\varphi$ is {\it biharmonic} if it is a critical point of $E_2$ or,
equivalently, if it satisfies the associated Euler-Lagrange
equation
$$
\tau_2(\varphi)=-\Delta\tau(\varphi) -\trace R^N(d\varphi,\tau(\varphi))d\varphi
= 0.
$$
The study of the stress-energy tensor for the
bienergy was initiated  in \cite{GYJ2} and afterwards developed in \cite{LMO}. Its expression is \begin{eqnarray*}
S_2(X,Y)&=&\frac{1}{2}\vert\tau(\varphi)\vert^2\langle X,Y\rangle+
\langle d\varphi,\nabla\tau(\varphi)\rangle \langle X,Y\rangle \\
\nonumber && -\langle d\varphi(X), \nabla_Y\tau(\varphi)\rangle-\langle
d\varphi(Y), \nabla_X\tau(\varphi)\rangle,
\end{eqnarray*}
and it satisfies the condition
\begin{equation}\label{eq:2-stress-condition}
\Div S_2=-\langle\tau_2(\varphi),d\varphi\rangle,
\end{equation}
thus conforming to the principle of a  stress-energy tensor for the
bienergy.

If  $\varphi:(M,g)\to (N,h)$ is an isometric immersion then \eqref{eq:2-stress-condition} becomes
$$
\Div S_2=-\tau_2(\varphi)^{\top}.
$$
This means that isometric immersions with $\Div S_2=0$ correspond to immersions with vanishing tangent part of the corresponding bitension field.
The decomposition of the bitension field with respect to its normal and
tangent components was obtained with contributions of \cite{BMO12,C84,LM08,O02,O10} and for hypersurfaces it can be summarized in the following theorem.

\begin{theorem}\label{th: bih subm N}
Let  $\varphi:M^m\to N^{m+1}$ be an isometric immersion with mean curvature vector field $H=f\eta$. Then, $\varphi$ is biharmonic if and only if the normal and the tangent components of $\tau_2(\varphi)$  vanish,  i.e. respectively

\begin{subequations}
\begin{equation}\label{eq: caract_bih_normal}
\Delta {f}-f |A|^2+ f\ricci^N(\eta,\eta)=0,
\end{equation}
and
\begin{eqnarray}\label{eq: caract_bih_tangent}
2 A(\grad f)+  f \grad f-2 f \ricci^N(\eta)^{\top}=0
\end{eqnarray}
\end{subequations}
where $A$ is the shape operator, $f=\trace A$ is the mean curvature function and $\ricci^N(\eta)^{\top}$ is the tangent  component of the Ricci curvature of $N$ in the direction of the unit normal $\eta$ of $M$ in $N$.
\end{theorem}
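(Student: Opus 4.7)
The plan is to compute the bitension field
\[
\tau_2(\varphi)=-\Delta^\varphi\tau(\varphi)-\trace R^N(d\varphi,\tau(\varphi))d\varphi
\]
explicitly and then split the result into its tangent and normal components along $M$; since $\varphi$ is biharmonic if and only if $\tau_2(\varphi)=0$, this will give the two stated equations. For an isometric immersion one has $\tau(\varphi)=H=f\eta$, so everything reduces to evaluating the rough Laplacian of the normal vector field $f\eta$ together with the Riemann trace.

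First I would expand $\nabla^\varphi_Y(f\eta)=(Yf)\eta-fA(Y)$ using the Weingarten formula $\nabla^N_X\eta=-A(X)$, then iterate using the Gauss formula $\nabla^N_X Z=\nabla^M_X Z+\langle A(X),Z\rangle\eta$ to transport $A(Y)$ back across $\nabla^N_X$. A direct calculation yields
\[
\nabla^2_{X,Y}(f\eta)=\bigl[(\nabla^2 f)(X,Y)-f\langle A(X),A(Y)\rangle\bigr]\eta-(Xf)A(Y)-(Yf)A(X)-f(\nabla_X A)(Y).
\]
Tracing over an orthonormal frame $\{e_i\}$ of $M$ produces the scalars $\Delta f$ and $f|A|^2$, twice the vector $A(\grad f)$, and a divergence-type term $\sum_i(\nabla_{e_i}A)(e_i)$.

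To dispose of the last term I would invoke Codazzi. Since $\nabla A$ is symmetric and $\sum_i\langle(\nabla_X A)(e_i),e_i\rangle=Xf$, contracting
\[
\langle R^N(X,Y)\eta,Z\rangle=\langle(\nabla_X A)(Y)-(\nabla_Y A)(X),Z\rangle
\]
in $Y=Z=e_i$ and using the Ricci identity $\sum_i\langle R^N(X,e_i)e_i,\eta\rangle=\langle\ricci^N(\eta)^{\top},X\rangle$ gives $\sum_i(\nabla_{e_i}A)(e_i)=\grad f-\ricci^N(\eta)^{\top}$, up to the overall sign fixed by the stated conventions. For the curvature piece in $\tau_2$, the trace $\sum_i R^N(e_i,\eta)e_i$ is (up to sign) the Ricci operator at $\eta$ once the automatically vanishing $R^N(\eta,\eta)\eta$ summand is added, and its decomposition is $\ricci^N(\eta)^{\top}+\ricci^N(\eta,\eta)\eta$.

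Assembling the pieces, the normal component of $\tau_2(\varphi)$ collects only the $\eta$-contributions from the rough Laplacian and the curvature trace, producing exactly $\Delta f-f|A|^2+f\,\ricci^N(\eta,\eta)=0$, while the tangent component gathers $2A(\grad f)$, $f\grad f$, and two copies of $f\,\ricci^N(\eta)^{\top}$ of the same sign (one arising from Codazzi, one from the curvature trace), giving $2A(\grad f)+f\grad f-2f\,\ricci^N(\eta)^{\top}=0$. The main obstacle is precisely the sign bookkeeping: one must verify that the tangent Ricci contribution from Codazzi and the tangent part of $-\trace R^N(d\varphi,f\eta)d\varphi$ add rather than cancel, using consistently the conventions $\Delta^\varphi V=-\trace\nabla^2 V$ and $R^N(X,Y)=[\nabla_X,\nabla_Y]-\nabla_{[X,Y]}$ adopted in the paper.
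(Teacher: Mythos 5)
Your computation is correct in outline, but note that the paper itself does not prove Theorem~\ref{th: bih subm N}: it only cites the references where the normal/tangent decomposition of the bitension field was established, so there is no in-paper argument to compare against. What you propose is precisely the standard derivation from those sources: expand
$\nabla^2_{X,Y}(f\eta)=\bigl[(\nabla^2f)(X,Y)-f\langle A(X),A(Y)\rangle\bigr]\eta-(Xf)A(Y)-(Yf)A(X)-f(\nabla_XA)(Y)$
via Gauss--Weingarten, trace it, eliminate $\sum_i(\nabla_{e_i}A)(e_i)=\grad f-\ricci^N(\eta)^{\top}$ by the contracted Codazzi equation, and split $-f\sum_iR^N(e_i,\eta)e_i=f\,\ricci^N(\eta)^{\top}+f\,\ricci^N(\eta,\eta)\,\eta$; the two tangential Ricci contributions do add rather than cancel, and the tangential component of $\tau_2(\varphi)$ is exactly $-\bigl(2A(\grad f)+f\grad f-2f\,\ricci^N(\eta)^{\top}\bigr)$, giving \eqref{eq: caract_bih_tangent}. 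The one point to be careful about is the sign bookkeeping you yourself flagged: with the paper's convention $\Delta^\varphi V=-\trace\nabla^2V$ applied also to functions, the normal component of $\tau_2(\varphi)$ comes out as $-\Delta f-f|A|^2+f\,\ricci^N(\eta,\eta)$, so its vanishing reads $\Delta f+f|A|^2-f\,\ricci^N(\eta,\eta)=0$; the relative sign of $\Delta f$ in \eqref{eq: caract_bih_normal} as printed corresponds to reading $\Delta f$ there as $\trace\nabla^2 f$. This discrepancy is inherited from the statement rather than introduced by your argument, and it is immaterial for the rest of the paper, which uses only the tangential equation, but if you carry out the computation you announce you should expect not to land literally on \eqref{eq: caract_bih_normal}.
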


Finally, from \eqref{eq: caract_bih_tangent}, an isometric immersion $\varphi:M^m\to N^{m+1}$  satisfies $\Div S_2=0$, i.e. it is biconservative, if and only if
$$
2 A(\grad f)+  f \grad f-2 f \ricci^N(\eta)^{\top}=0
$$
which is Equation~\eqref{eq-def-biconservative-space} given in the introduction.

\section{Biconservative surfaces in the 3-dimensional space forms}\label{sec-space-forms}

In this section we consider the case of biconservative surfaces $M^2$ in a 3-dimensional  space form $N^3(c)$ of sectional curvature $c$. 
%Without loss of generality we can assume $M$ orientable. 
In this setting \eqref{eq-def-biconservative-space}  becomes

\begin{equation}\label{eq:biconservative-r3}
A(\grad f)=-\frac{f}{2} \grad f\,.
\end{equation}

If $M^2$ is a CMC surface, that is $f=\cst$, then $\grad f=0$ and \eqref{eq:biconservative-r3}
is automatically satisfied. Thus biconservative surfaces include the class CMC surfaces whether compact or not.

We now assume that $\grad f\neq 0$ at a point $p\in M$ and, therefore, there exists a
neighbourhood  $U$ of $p$ such that $\grad f\neq 0$  at any point of $U$.
On the set $U$ we can define an orthonormal frame $\{X_1,X_2\}$ of vector fields by
\begin{equation}\label{eq:def-x1-x2}
X_1=\frac{\grad f}{|\grad f|}\,,\quad X_2\perp X_1\,,\quad |X_2|=1\,.
\end{equation}
From \eqref{eq:biconservative-r3} we have
$$
A(X_1)=-\frac{f}{2}X_1\,,
$$
thus $X_1$ is a principal direction corresponding to the principal curvature $\lambda_1=-f/2$.
Since $X_2\perp X_1$, $X_2$ is a principal direction with eigenvalue $\lambda_2$ such that
$$
f=\trace A=\lambda_1+\lambda_2=-\frac{f}{2}+\lambda_2
$$
and therefore $\lambda_2=3f/2$. From this, using the Weingarten equation, we immediately see that the Gauss curvature of the surface is
\begin{equation}\label{eq:gauss-biconservative}
K=\det A+c=- 3f^2/4+c
\end{equation}
and the norm of the shape operator is $|A|^2=5f^2/2$. Moreover, by the definition of $X_1$, we obtain
$$
(X_1 f) X_1=\langle \grad f, X_1 \rangle X_1 =\grad f.
$$
Thus,
$$
\grad f=(X_1 f)X_1 +(X_2 f) X_2=\grad f + (X_2 f) X_2,
$$
which implies that
\begin{equation}\label{eq:x2f0}
X_2 f=0.
\end{equation}

We are now in the right position to state the main result of this section.

\begin{theorem}\label{teo:bi-conservative-laplacian}
Let $M^2$ be a biconservative surface in $N^3(c)$ which is not CMC. Then, there exists an open subset $U$ of $M$, such that the restriction of $f$ in $U$ satisfies the following equations
\begin{equation}\label{eq:gauss-biconservative-neg}
K=\det A+c=- 3f^2/4+c
\end{equation}
and
\begin{equation}\label{eq:f-bi-conservative-local}
f \Delta f+|\grad f|^2-\frac{16}{9} K(K-c)=0,
\end{equation}
where $\Delta$ is the Laplace-Beltrami operator on $M$.
\end{theorem}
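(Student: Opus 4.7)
The plan is to work on the open set $U$ where $\grad f \neq 0$, using the orthonormal principal frame $\{X_1,X_2\}$ from \eqref{eq:def-x1-x2}, and to extract both conclusions from the Gauss and Codazzi equations.

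Equation \eqref{eq:gauss-biconservative-neg} is immediate: the biconservative condition \eqref{eq:biconservative-r3} gives $A(X_1)=-(f/2)X_1$, the trace condition forces $\lambda_2=3f/2$, so $\det A=-3f^2/4$ and the Gauss equation yields $K=\det A+c=-3f^2/4+c$. This already computed in the excerpt above \eqref{eq:gauss-biconservative}.

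For \eqref{eq:f-bi-conservative-local}, first I would pin down the Levi-Civita connection in the frame $\{X_1,X_2\}$. Writing
\[
\nabla_{X_1}X_1=\alpha\,X_2,\qquad \nabla_{X_2}X_2=\beta\,X_1,
\]
(with $\nabla_{X_1}X_2=-\alpha X_1$, $\nabla_{X_2}X_1=-\beta X_2$), I would apply the Codazzi equation $(\nabla_{X_1}A)(X_2)=(\nabla_{X_2}A)(X_1)$. Because $X_2f=0$ by \eqref{eq:x2f0} and $\lambda_1-\lambda_2=-2f\neq 0$, the two scalar Codazzi components give
\[
\alpha=0,\qquad \beta=-\frac{3|\grad f|}{4f}.
\]
So $\nabla_{X_1}X_1=0$ and $\nabla_{X_2}X_2=-\tfrac{3|\grad f|}{4f}X_1$. (I would not have a sign ambiguity to worry about because $X_1f=|\grad f|>0$.)

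Next I would compute $\Delta f$ in this frame. Using the paper's sign convention $\Delta f=-\sum_i(X_iX_if-(\nabla_{X_i}X_i)f)$ and $X_2f=0$, this collapses to
\[
\Delta f=-X_1^2f+\frac{3|\grad f|^2}{4f},
\]
hence
\[
fX_1^2f=-f\Delta f+\tfrac{3}{4}|\grad f|^2. \qquad (\ast)
\]
Independently I would compute the Gauss curvature intrinsically from the frame. Using $[X_1,X_2]=-\beta X_2$ and the formula $R(X_1,X_2)X_2=\nabla_{X_1}\nabla_{X_2}X_2-\nabla_{X_2}\nabla_{X_1}X_2-\nabla_{[X_1,X_2]}X_2$, a direct calculation gives
\[
K=\langle R(X_1,X_2)X_2,X_1\rangle=\tfrac{3}{4}X_1\!\left(\tfrac{|\grad f|}{f}\right)-\tfrac{9|\grad f|^2}{16f^2}.
\]
Expanding the derivative, multiplying by $f^2$ and substituting $X_1|\grad f|=X_1^2f$ and then $(\ast)$ reduces the right hand side to $-\tfrac{3}{4}f\Delta f-\tfrac{3}{4}|\grad f|^2$. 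Equating with $f^2 K=f^2(-3f^2/4+c)$ from \eqref{eq:gauss-biconservative-neg} and rearranging produces
\[
f\Delta f+|\grad f|^2=f^4-\tfrac{4c}{3}f^2.
\]
Finally I would check that the right hand side equals $\tfrac{16}{9}K(K-c)$: since $K-c=-3f^2/4$ and $K=-3f^2/4+c$, one has $K(K-c)=\tfrac{9f^4}{16}-\tfrac{3cf^2}{4}$, and multiplying by $16/9$ gives exactly $f^4-\tfrac{4c}{3}f^2$. This yields \eqref{eq:f-bi-conservative-local}.

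The main delicate step is the identification of the connection coefficients via Codazzi together with the careful computation of $K$ from the frame; once $\alpha=0$ and $\beta=-3|\grad f|/(4f)$ are in hand, the remaining work is algebraic bookkeeping using $(\ast)$ and \eqref{eq:gauss-biconservative-neg}.
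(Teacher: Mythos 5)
Your argument is correct and follows essentially the same route as the paper: the principal frame determined by the biconservative condition, Codazzi to pin down the connection form, the intrinsic curvature of that frame equated with $\det A+c$, and the expression of $\Delta f$ in the frame. The only blemish is a sign slip in an intermediate step: Codazzi actually gives $\nabla_{X_2}X_2=+\tfrac{3(X_1f)}{4f}X_1$ (cf.\ \eqref{eq:connection-M}), not $-\tfrac{3|\grad f|}{4f}X_1$, and your $[X_1,X_2]=-\beta X_2$ is inconsistent with your own convention $\nabla_{X_2}X_1=-\beta X_2$; these two slips cancel, and the formulas you actually use downstream for $\Delta f$, $[X_1,X_2]$ and $K$ are the correct ones, so the computation and the conclusion are unaffected.
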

\begin{proof}
Since $M^2$ is not CMC, there exists a point $p$ with $\grad f(p)\neq 0$. Thus $\grad f \neq 0$ in a neighborhood $V$ of $p$. Now, since $f$ cannot be zero for all $q\in V$, there exists an open set $U\subset V$ with $f(q)\neq 0$ for all $q\in U$. Let us define on $U$ the local orthonormal frame $\{X_1,X_2\}$ as
in \eqref{eq:def-x1-x2} and let  $\{\omega^1,\omega^2\}$ be the  dual $1$-forms of $\{X_1,X_2\}$
with $\omega^j_i$  the connection $1$-forms given by $\nabla X_i=\omega^j_iX_j$.
Since $f\neq0$  on $U$, we can assume that $f>0$  on $U$.  

Equation  \eqref{eq:gauss-biconservative-neg} is just \eqref{eq:gauss-biconservative}. We shall prove \eqref{eq:f-bi-conservative-local}.

Since $A(X_1)=-(f/2) X_1$ and $A(X_2)=(3f/2) X_2$, from the Codazzi equation
$$
\nabla_{X_1}A(X_2)-\nabla_{X_2}A(X_1)=A([X_1,X_2])
$$
we obtain
$$
\left(4f \omega^1_2(X_1)+X_2f \right)X_1+
\left(3X_1f +4f \omega^2_1(X_2)\right)X_2=0.
$$
Since $X_2f=0$ and $f(p)\neq 0$ for all $p\in U$, we deduce that
\begin{equation}\label{eq:w12x1-w12x2}
\begin{cases}
\omega^1_2(X_1)=0\\
\\
\omega^1_2(X_2)=\dfrac{3}{4} \dfrac{X_1f}{f}\,.
\end{cases}
\end{equation}
Next, using \eqref{eq:w12x1-w12x2}, the Gauss curvature of $M^2$ is
$$
K=\langle R(X_1,X_2)X_2,X_1\rangle=X_1(\omega^1_2(X_2))-(\omega^1_2(X_2))^2,
$$
that, together with \eqref{eq:gauss-biconservative}, gives
$$
-\frac{3f^2}{4}+c=X_1(\omega^1_2(X_2))-(\omega^1_2(X_2))^2
$$
which is equivalent, taking into account \eqref{eq:w12x1-w12x2}, to
\begin{equation}\label{eq:f-prime-equation}
(X_1X_1f) f = \frac{7}{4} (X_1f)^2+ \frac{4c}{3}f^2-f^4\,.
\end{equation}
Now, a straightforward computation gives
$$
-\Delta f= X_1X_1f - \frac{3}{4f} (X_1f)^2\,,
$$
that, substituted in \eqref{eq:f-prime-equation}, taking into account \eqref{eq:gauss-biconservative},
yields the desired equation
$$
f \Delta f+|\grad f|^2-\frac{16}{9} K(K-c)=0.
$$
\end{proof}
\section{Biconservative surfaces in $\r^3$}\label{sec:bi-r3}
We shall now consider the case of biconservative surfaces in $\r^3$. 
We start our study investigating in detail the case of surfaces of revolution. Without loss of generality we can assume that the surface is (locally) parametrised by
\begin{equation}\label{eq:rotation-biconservative}
X(u,v)=(\rho(u) \cos v, \rho(u) \sin v, u)
\end{equation}
where the real valued function $\rho$ is assumed to be positive. The induced metric is
$ds^2=(1+\rho'^2)du^2+\rho^2dv^2$, and a routine calculation gives
$$
A=\left(\begin{array}{cc}
-\dfrac{\rho''}{(1+\rho'^2)^{3/2}}&0\\
0& \dfrac{1}{\rho(1+\rho'^2)^{1/2}}
\end{array}\right)\,.
$$
Thus
$$
f= \dfrac{1}{(1+\rho'^2)^{1/2}}\left(\frac{1}{\rho}-\dfrac{\rho''}{(1+\rho'^2)}\right)\,,
$$
and
$$
\grad f=\dfrac{1}{(1+\rho'^2)} f' \frac{\partial}{\partial u}.
$$
Then \eqref{eq:biconservative-r3} becomes
\begin{equation}\label{eq:biconservative-rho-revolution}
\frac{f'}{2(1+\rho'^2)^{3/2}} \left(\dfrac{3\rho''}{1+\rho'^2} - \dfrac{1}{\rho} \right)= 0\,.
\end{equation}

\begin{proposition}\label{pro-surf-revolution}
Let $M^2$ be a biconservative surface of revolution in $\r^3$ with non constant mean curvature. Then, locally, the surface can be parametrized by
$$
X_C(\rho,v)=(\rho \cos v, \rho \sin v, u(\rho))
$$
where 
$$
u(\rho)=\frac{3}{2 C}\left(\rho^{1/3} \sqrt{C\rho^{2/3}-1}+\frac{1}{\sqrt{C}} \ln\left[2(C \rho^{1/3} + \sqrt{C^2\rho^{2/3}-C})\right]\right)\,,$$
with $C$  a positive constant and $\rho\in(C^{-3/2},\infty)$. The parametrization $X_C$ consists of a family of biconservative surfaces of revolution any two of which  are not locally isometric.
\end{proposition}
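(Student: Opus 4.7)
The plan is to reduce the biconservative condition to a solvable ODE for the profile, integrate it in closed form, and then distinguish the resulting one-parameter family using an intrinsic invariant. On any open interval where $f'\neq 0$ (guaranteed by the non-constancy of the mean curvature), the factored identity \eqref{eq:biconservative-rho-revolution} forces the second factor to vanish, producing the autonomous second-order ODE
$$3\rho\rho'' = 1+\rho'^2.$$
Setting $p=\rho'$ and treating $p$ as a function of $\rho$ so that $\rho''=p\,dp/d\rho$, this becomes the separable equation $p\,dp/(1+p^2)=d\rho/(3\rho)$, which integrates to the first integral
$$1+\rho'^2 = C\rho^{2/3}$$
with $C>0$ a positive constant of integration. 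Hence $\rho\geq C^{-3/2}$, with equality exactly on the minimal parallel $\rho'=0$; away from that parallel $\rho$ can be used as a regular coordinate on the surface.

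A second quadrature then produces $u(\rho)$. From $du/d\rho=(C\rho^{2/3}-1)^{-1/2}$, the substitution $w=\rho^{1/3}$ reduces the integral to $3\int w^2(Cw^2-1)^{-1/2}\,dw$, and $\sqrt{C}\,w=\cosh t$ integrates this explicitly to the claimed expression, up to an additive constant corresponding to a harmless vertical translation of the surface. Matching the precise logarithmic factor in the statement is just the identity $\sqrt{C}\rho^{1/3}+\sqrt{C\rho^{2/3}-1}=(C\rho^{1/3}+\sqrt{C^2\rho^{2/3}-C})/\sqrt{C}$, which only shifts the constant. Conversely, every $C>0$ produces, by reversing these steps, a profile satisfying $3\rho\rho''=1+\rho'^2$ and hence a genuine biconservative surface of revolution, so the family is indexed precisely by $C>0$.

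For the non-isometry claim, substituting $1+\rho'^2=C\rho^{2/3}$ and $\rho''=C/(3\rho^{1/3})$ into the formula for $f$ yields the compact expression $f=2/(3\sqrt{C}\,\rho^{4/3})$, whence the Gauss curvature is $K=-3f^2/4=-1/(3C\rho^{8/3})$. Since $K$ is strictly monotone in $\rho$, one may invert to express $\rho$ in terms of $K$; the intrinsic quantity $|\nabla K|^2$, computed using $|\nabla\rho|^2 = 1-1/(C\rho^{2/3})$, then takes the form $|\nabla K|^2 = \alpha\, C^{3/4}(-K)^{11/4} - \beta\,(-K)^3$ for universal positive constants $\alpha,\beta$. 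Two locally isometric surfaces would have to realize the same functional relation between $K$ and $|\nabla K|^2$ on an open set of $K$-values, which by analyticity forces the coefficient $\alpha C^{3/4}$ to agree, hence $C_1=C_2$. The main obstacle is not conceptual but rather keeping accurate bookkeeping of the powers of $\rho$ and $C$ throughout the quadrature and the intrinsic computation of $|\nabla K|^2$.
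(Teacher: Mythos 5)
Your derivation of the profile is essentially the paper's own proof: the biconservative condition reduces to $3\rho\rho''=1+\rho'^2$, a first quadrature gives $\rho'^2=C\rho^{2/3}-1$, and a second (via $w=\rho^{1/3}$) gives the stated $u(\rho)$ up to a vertical translation absorbing the constant $\ln(2\sqrt{C})$. Where you go beyond the paper is the last claim: the paper's proof does not actually argue that distinct values of $C$ give non-isometric surfaces, whereas your computation $f=2/(3\sqrt{C}\rho^{4/3})$, $K=-1/(3C\rho^{8/3})$, $|\nabla\rho|^2=1-1/(C\rho^{2/3})$ leading to the intrinsic relation $|\nabla K|^2=\alpha C^{3/4}(-K)^{11/4}-\beta(-K)^3$ is correct (I checked the exponents), and since $K<0$ everywhere, evaluating this relation at a single common value of $K$ already forces $C_1=C_2$ --- so your argument legitimately fills a gap the paper leaves open.
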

\begin{proof}
%Surfaces with $f=\cst$, as we have already pointed out in section~\ref{sec-space-forms}, are solutions 
%of \eqref{eq:biconservative-rho-revolution}. 
If $f$ is not constant, then from \eqref{eq:biconservative-rho-revolution} we must have that $\rho$ is a solution of the following ODE
\begin{equation}\label{eq:ode-biconservative}
3\rho\, \rho''=1+(\rho')^2\,.
\end{equation}
We shall now integrate \eqref{eq:ode-biconservative}. Using the change of variables $y=\rho'^2$ we get
$$
3 \dfrac{dy}{1+y}=2\dfrac{d\rho}{\rho}\,.
$$
Integration yields 
$$
\rho'^2=C \rho^{2/3}-1\,,
$$
where $C$ is a positive constant. Thus 
$$
\frac{d\rho}{\sqrt{C \rho^{2/3}-1}}=\pm du\,.
$$
Now, using the change of variable $y=\rho^{1/3}$,  we obtain
$$
\frac{3 y^2}{\sqrt{C y^2-1}}dy= \pm du\,.
$$
The latter equation can be integrated and, up to a symmetry with respect to the $xy$-plane, followed by a translation along the vertical $z$-axis, gives the following solution
$$
u=u(\rho)=\frac{3}{2 C}\left(\rho^{1/3} \sqrt{C\rho^{2/3}-1}+\frac{1}{\sqrt{C}} \ln\left[2(C \rho^{1/3} + \sqrt{C^2\rho^{2/3}-C})\right]\right)\,,
$$
where $\rho\in(C^{-3/2},\infty)$. Since the derivative of $u(\rho)$ is
$$
u'(\rho)=\frac{1}{\sqrt{C\rho^{2/3}-1}}
$$
we deduce that $u(\rho)$ is invertible for $\rho\in(C^{-3/2},\infty)$ and its inverse function produces the desired 
solution of \eqref{eq:ode-biconservative}. For a plot of the function $u(\rho)$ see Figure~\ref{fig:ploturho}.
\end{proof}

\begin{remark}
If we denote by $\sigma(u)=(\rho(u),0,u)$ the profile curve of the surface described in Proposition~\ref{pro-surf-revolution} and we reparametrize it by arclength, then its curvature function $k$ satisfies the ODE
$$
k k''=\frac{7}{4} (k')^2-4 k^4\,.
$$
Moreover,  the Gauss curvature and mean curvature functions of the surface are
$$
K(\rho,v)=-\frac{1}{3 C \rho^{8/3}}\,,\quad f(\rho,v)=\frac{2}{3 \sqrt{C} \rho^{4/3}}\,.
$$
It is worth remarking that $f$ is non constant (as assumed in the Proposition~\ref{pro-surf-revolution}) and that the values of $K$ and $f$ are in accord with \eqref{eq:gauss-biconservative}.
\end{remark}
\begin{figure}[h!]
\begin{pspicture}(-5,-.5)(5,6)
\psset{xunit=1cm,yunit=1cm,linewidth=.03,arrowscale=2}
%\psaxes[labels=none,ticks=none]{->}(0,0)(-4,-0.2)(4,3)
\put(-3,.5){\includegraphics[width=5.5cm]{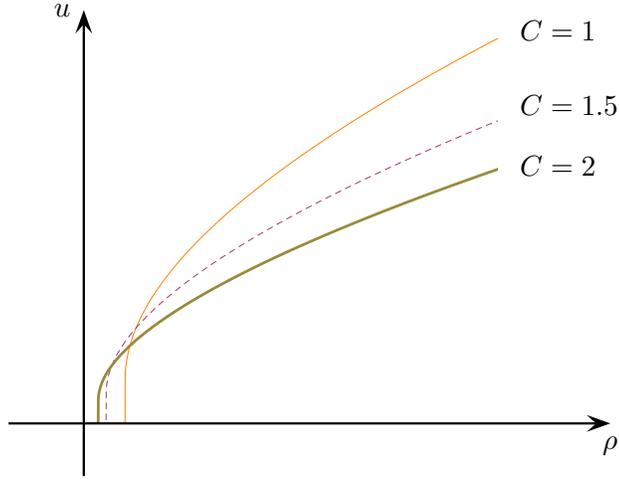}}
\psline{->}(-4,.5)(4,.5)
\psline{->}(-3,-.2)(-3,6)
\uput[270](4,.5){$\rho$}
\uput[180](-3,6){$u$}
\put(2.8,5.6){$C=1$}
\put(2.8,4.6){$C=1.5$}
\put(2.8,3.8){$C=2$}
%\put(1.9,5.6){$c=0.5$}
%\put(1.3,5.1){$c=0$}
%\put(-0.1,5.8){$p$}
%\psdots[dotsize=0.15](-0.19,6.0)
\end{pspicture}
\caption{\label{fig:ploturho}Plots of the function $u(\rho)$ for $C=1$, $C=1.5$ and $C=2$.}

\end{figure}
\subsection{The general case}
We shall now prove that, essentially, the family described in Proposition~\ref{pro-surf-revolution} gives, locally,  all non CMC biconservative surfaces. To achieve this we assume that $f$ is strictly positive and that $\grad f\neq 0$ at any point. We define the local orthonormal frame $\{X_1,X_2\}$ as
in \eqref{eq:def-x1-x2} and from the calculations in the proof of Theorem~\ref{teo:bi-conservative-laplacian}  we have
\begin{equation}\label{eq:connection-M}
\left\{\begin{array}{ll}
\nabla_{X_1}X_1=0,&\quad \nabla_{X_1}X_2=0,\\
&\\
\nabla_{X_2}X_1=-\dfrac{3(X_1f)}{4f}X_2,&\quad \nabla_{X_2}X_2=\dfrac{3(X_1f)}{4f}X_1\,.
\end{array}
\right.
\end{equation}
Let $\eta$ be a unit vector field normal to the surface $M$. Then, if we denote by $\overline{\nabla}$ the connection of $\r^3$,  a straightforward computation gives
\begin{equation}\label{eq:connection-r3}
\left\{\begin{array}{ll}
\overline{\nabla}_{X_1}X_1=-\dfrac{f}{2}\eta,&\quad \overline{\nabla}_{X_1}X_2=0,\\
&\\
\overline{\nabla}_{X_2}X_1=-\dfrac{3(X_1f)}{4f}X_2,&\quad \overline{\nabla}_{X_2}X_2=\dfrac{3(X_1f)}{4f}X_1+\dfrac{3f}{2}\eta,\\
&\\
 \overline{\nabla}_{X_1}\eta=\dfrac{f}{2}X_1,&\quad \overline{\nabla}_{X_2}\eta=-\dfrac{3f}{2}X_2\,.
\end{array}
\right.
\end{equation}
Put
\begin{equation}\label{eq-defxi}
\kappa_2\, \xi=\dfrac{3(X_1f)}{4f}X_1+\dfrac{3f}{2}\eta=\overline{\nabla}_{X_2}X_2
\end{equation}
where
\begin{equation}\label{eq:defk2}
\kappa_2=\sqrt{\dfrac{9(X_1f)^2}{16f^2}+\dfrac{9f^2}{4}}.
\end{equation}
 We have the following lemma.

\begin{lemma}\label{lem-nablax2xi}
The function $\kappa_2$ and the vector field $\xi$ satisfy
\begin{itemize}
\item[(a)] $X_2\kappa_2=0$;\vspace{2mm}
\item[(b)] $\overline{\nabla}_{X_2}\xi=-\kappa_2\,X_2$;\vspace{2mm}
\item[(c)] $4 (X_1\kappa_2)/\kappa_2=3 (X_1f)/f$;\vspace{2mm}
\item[(d)] $\overline{\nabla}_{X_1}\xi=0$.
\end{itemize}
\end{lemma}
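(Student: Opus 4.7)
The plan is to prove all four parts by direct computation from the defining identity
$\kappa_2\xi = \tfrac{3(X_1f)}{4f}X_1 + \tfrac{3f}{2}\eta$,
together with the frame relations \eqref{eq:connection-M}--\eqref{eq:connection-r3}, the identity $X_2f=0$, and the second-order equation \eqref{eq:f-prime-equation} (with $c=0$ since we are in $\r^3$), namely $(X_1X_1f)\,f = \tfrac{7}{4}(X_1f)^2 - f^4$.

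For (a), since $\kappa_2$ is an algebraic function of $f$ and $X_1f$ alone, it suffices to show $X_2(X_1f)=0$. From \eqref{eq:connection-M} the commutator is $[X_1,X_2] = \nabla_{X_1}X_2-\nabla_{X_2}X_1 = \tfrac{3(X_1f)}{4f}X_2$, so $[X_1,X_2]f = \tfrac{3(X_1f)}{4f}X_2f = 0$. Combined with $X_2f=0$, this yields $X_2X_1f=0$, hence $X_2\kappa_2=0$.

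For (b), I would apply $\overline{\nabla}_{X_2}$ to the defining identity. By (a) the derivatives of the coefficient functions vanish, so only $\overline{\nabla}_{X_2}X_1$ and $\overline{\nabla}_{X_2}\eta$ contribute; substituting from \eqref{eq:connection-r3} gives $\overline{\nabla}_{X_2}(\kappa_2\xi) = -\bigl[\tfrac{9(X_1f)^2}{16f^2}+\tfrac{9f^2}{4}\bigr]X_2 = -\kappa_2^2 X_2$, and Leibniz together with (a) lets me divide by $\kappa_2$ to obtain (b). For (c), I would differentiate $\kappa_2^2 = \tfrac{9(X_1f)^2}{16f^2}+\tfrac{9f^2}{4}$ along $X_1$; this introduces an $X_1X_1f$ term, which I eliminate using \eqref{eq:f-prime-equation}. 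After simplification the right-hand side should factor as $\tfrac{3(X_1f)}{2f}\kappa_2^2$, yielding $\tfrac{4X_1\kappa_2}{\kappa_2} = \tfrac{3X_1f}{f}$. This is the step where the ODE for $f$ is indispensable, and it is the main algebraic obstacle.

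Finally, for (d), I proceed analogously to (b): apply $\overline{\nabla}_{X_1}$ to the defining identity, expand with \eqref{eq:connection-r3} (so $\overline{\nabla}_{X_1}X_1 = -\tfrac{f}{2}\eta$ and $\overline{\nabla}_{X_1}\eta = \tfrac{f}{2}X_1$), and reduce the resulting $X_1X_1f$ term via \eqref{eq:f-prime-equation}. Separately, I expand $\overline{\nabla}_{X_1}(\kappa_2\xi) = (X_1\kappa_2)\xi + \kappa_2\overline{\nabla}_{X_1}\xi$ and, using (c) to replace $X_1\kappa_2/\kappa_2$ by $\tfrac{3(X_1f)}{4f}$, rewrite $(X_1\kappa_2)\xi$ in the frame $\{X_1,\eta\}$. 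The two expressions should coincide coefficient by coefficient, forcing $\kappa_2\overline{\nabla}_{X_1}\xi = 0$ and hence $\overline{\nabla}_{X_1}\xi = 0$.
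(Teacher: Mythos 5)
Your proposal is correct and follows essentially the same route as the paper: (a) via the commutator $[X_1,X_2]=\tfrac{3(X_1f)}{4f}X_2$, (b) and (d) by covariant differentiation of the defining identity using \eqref{eq:connection-r3}, and (c) by differentiating $\kappa_2$ and reducing to the ODE \eqref{eq:f-prime-equation} with $c=0$. The only cosmetic difference is that you differentiate $\kappa_2\xi$ and $\kappa_2^2$ rather than $\xi$ and $\kappa_2$ directly, which changes nothing of substance.
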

\begin{proof}
From $X_2f=0$ and $[X_1,X_2]=3 (X_1f)X_2/(4f)$, if follows that
$$
X_2X_1f=X_1X_2f-[X_1,X_2]f=0.
$$
Since $\kappa_2$ depends only on $f$ and $X_1f$, (a) follows.
To prove (b), using (a) and \eqref{eq:connection-r3}, we have
\begin{eqnarray*}
\overline{\nabla}_{X_2}\xi&=&\frac{1}{\kappa_2}\overline{\nabla}_{X_2}\left(\dfrac{3(X_1f)}{4f}X_1+\dfrac{3f}{2}\eta\right)\\
%&=&\frac{1}{\kappa_2}\left( -\dfrac{9(X_1f)^2}{16f^2} X_2+ \frac{3f}{2} \overline{\nabla}_{X_2}\eta\right)\\
&=& \frac{1}{\kappa_2}\left( -\dfrac{9(X_1f)^2}{16f^2} X_2- \frac{9f^2}{4} X_2\right)\\
&=&-\frac{1}{\kappa_2}\, \kappa_2^2\, X_2=-\kappa_2\,X_2\,.
\end{eqnarray*}
To prove (c), first observe that a direct computation gives
$$
4\frac{X_1\kappa_2}{\kappa_2}=\frac{1}{4f^4}\frac{9f^2 (X_1f)(X_1X_1f)-9f (X_1f)^3+36 f^5 (X_1f)}{\dfrac{9(X_1f)^2}{16f^2}+\dfrac{9f^2}{4}}\,.
$$
Then (c) is equivalent to
$$
3\frac{X_1f}{f}=\frac{1}{4f^4}\frac{9f^2 (X_1f)(X_1X_1f)-9f (X_1f)^3+36 f^5 (X_1f)}{\dfrac{9(X_1f)^2}{16f^2}+\dfrac{9f^2}{4}}
$$
which is itself equivalent to 
$$
f(X_1X_1f)-\frac{7}{4}(X_1f)^2+f^4=0\,.
$$
Now, the latter equation is \eqref{eq:f-bi-conservative-local} with $c=0$ (see also \eqref{eq:f-prime-equation}).

We now prove (d). First, from a direct computation, taking into account \eqref{eq:connection-r3}, we have
$$
\overline{\nabla}_{X_1}\xi=\frac{3}{4}\left(X_1\left(\frac{X_1f}{f\kappa_2}\right)+\frac{f^2}{\kappa_2}\right)X_1+\frac{3}{2}\left(X_1\left(\frac{f}{\kappa_2}\right)-\frac{1}{4}\frac{X_1f}{\kappa_2}\right)\eta\,.
$$
We have to show that both components are zero. First
$$
X_1\left(\frac{f}{\kappa_2}\right)-\frac{1}{4}\frac{X_1f}{\kappa_2}=0
$$
if and only if
$$
4 \frac{X_1\kappa_2}{\kappa_2}=3 \frac{X_1f}{f},
$$
which is identity (c).
Similarly, using (c),
$$
X_1\left(\frac{X_1f}{f\kappa_2}\right)+\frac{f^2}{\kappa_2}=0
$$
if and only if
$$
f(X_1X_1f)-\frac{7}{4}(X_1f)^2+f^4=0,
$$
which is identity \eqref{eq:f-prime-equation}.
\end{proof}

\begin{remark}
It is useful to observe that, from Lemma~\ref{lem-nablax2xi}, (a)-(b), the integral curves of the vector field
$X_2$ are circles in $\r^3$ with curvature $\kappa_2$.
\end{remark}
We are now in the right position to state the main result of this section.

\begin{theorem}[See also Proposition~3.1 in \cite{HV95}]\label{teo:r3general}
Let $M^2$ be a biconservative surface in $\r^3$ with $f(p)>0$ and $\grad f(p)\neq 0$ for any $p\in M$. Then, locally, $M^2$ is a surface of revolution.
\end{theorem}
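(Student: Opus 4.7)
The plan is to show that the surface is foliated by the integral curves of $X_2$, which form a family of circles lying in parallel planes with centers on a common straight line---precisely the defining property of a surface of revolution. The remark preceding the theorem already records that each $X_2$-integral curve is a circle in $\r^3$ of curvature $\kappa_2$; what I need to add is that the planes of these circles are parallel and that their centers are collinear.

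For the first point I would fix the right-handed orthonormal frame $\{X_1,X_2,\eta\}$ along $M$ and write $\xi=a\,X_1+b\,\eta$ with
$$
a=\frac{3(X_1 f)}{4f\kappa_2}, \qquad b=\frac{3f}{2\kappa_2}, \qquad a^2+b^2=1.
$$
The binormal to each $X_2$-circle is then $B:=X_2\times\xi=b\,X_1-a\,\eta$, and the circles lie in parallel planes precisely when $B$ is a constant vector field on $M$. The equation $\overline{\nabla}_{X_2}B=0$ is immediate from \eqref{eq:connection-r3} once one notes that $X_2 f=X_2\kappa_2=X_2(X_1 f)=0$, so the $X_2$-contributions of $\overline{\nabla}_{X_2}X_1$ and $\overline{\nabla}_{X_2}\eta$ cancel thanks to $a^2+b^2=1$. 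The equation $\overline{\nabla}_{X_1}B=0$ is the technical heart of the argument: expanding via \eqref{eq:connection-r3}, the coefficient of $X_1$ vanishes after invoking Lemma~\ref{lem-nablax2xi}(c), while the coefficient of $\eta$ requires the computation of $X_1 a$ and reduces, after simplification, precisely to the biconservative ODE \eqref{eq:f-prime-equation} with $c=0$.

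Next I would track the centers. Setting $c(p):=p+\frac{1}{\kappa_2(p)}\xi(p)$, the Euclidean center of the $X_2$-circle through $p$, Lemma~\ref{lem-nablax2xi}(a)--(b) give $\overline{\nabla}_{X_2}c=X_2-X_2=0$, so $c$ is constant along each leaf of the $X_2$-foliation. Using Lemma~\ref{lem-nablax2xi}(d),
$$
\overline{\nabla}_{X_1}c=X_1-\frac{X_1\kappa_2}{\kappa_2^{2}}\,\xi,
$$
and the identity $(X_1\kappa_2)/\kappa_2^{2}=a$, which is Lemma~\ref{lem-nablax2xi}(c) rewritten, converts this, via $a^2+b^2=1$, into $\overline{\nabla}_{X_1}c=b\,B$. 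Since $b>0$ and $B$ is a fixed vector of $\r^3$, the image of $c$ lies on a single straight line $L\subset\r^3$ parallel to $B$.

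Combining the two ingredients, each integral curve of $X_2$ is a circle lying in a plane perpendicular to $B$ and centered at a point of $L$, and as the $X_1$-parameter varies these circles sweep out $M$; by definition, $M$ is locally a surface of revolution about $L$. I expect the main obstacle to be verifying $\overline{\nabla}_{X_1}B=0$, since its two scalar components force, respectively, the use of Lemma~\ref{lem-nablax2xi}(c) and the biconservative equation \eqref{eq:f-prime-equation}; all the structural ingredients prepared in the section must be deployed simultaneously, and any missing one would leave a spurious nonzero component behind.
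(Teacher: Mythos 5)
Your argument is correct, and it rests on exactly the same ingredients as the paper's proof --- the adapted frame $\{X_1,X_2,\eta\}$, the foliation by integral curves of $X_2$ (circles of curvature $\kappa_2$), and all four parts of Lemma~\ref{lem-nablax2xi} --- but you assemble them differently at the end. The paper first writes down the explicit parametrization $X(u,v)=\sigma(u)+\kappa_2(u)^{-1}\bigl(C_1(\cos v-1)+C_2\sin v\bigr)$ via the flow of $X_2$, proves that $C_1=-\xi$ and $C_2=X_2$ are constant vectors, and then shows that the curve of centers $\beta(u)=\sigma(u)-C_1/\kappa_2(u)$ is a straight line orthogonal to $C_1$ and $C_2$ by computing $\beta'\wedge\beta''=0$ and $\langle\beta',C_i\rangle=0$. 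You instead argue invariantly: the binormal $B=X_2\wedge\xi=bX_1-a\eta$ is parallel in $\r^3$, and the center map $c=\mathbf r+\xi/\kappa_2$ satisfies $\overline{\nabla}_{X_2}c=0$ and $\overline{\nabla}_{X_1}c=bB$, which delivers the collinearity of the centers and the orthogonality of the axis to the planes of the circles in a single identity; this replaces the $\beta'\wedge\beta''$ computation with something shorter and, to my mind, more transparent. Two small corrections to your own commentary. First, $\overline{\nabla}_{X_1}B=0$ is not really ``the technical heart'': since $B=X_2\wedge\xi$, it follows in one line from $\overline{\nabla}_{X_1}X_2=0$ in \eqref{eq:connection-r3} together with Lemma~\ref{lem-nablax2xi}(d); the genuine use of \eqref{eq:f-prime-equation} was already spent in proving part (d) of the lemma, and expanding the coefficients of $B=bX_1-a\eta$ by hand merely repeats that computation. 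Second, the vanishing of $\overline{\nabla}_{X_2}B$ is due to the symmetric cancellation $-b\,(a\kappa_2)+a\,(b\kappa_2)=0$ (equivalently, to $\overline{\nabla}_{X_2}X_2=\kappa_2\xi$ and $\overline{\nabla}_{X_2}\xi=-\kappa_2X_2$), not to the normalization $a^2+b^2=1$. With those points tidied up, your proof is complete and matches the paper's conclusion.
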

\begin{proof}
Let $\gamma$ be an integrable curve of $X_2$ parametrized by arc-length. From Lemma~\ref{lem-nablax2xi}, (a)-(b), $\gamma$  is a circle in $\r^3$ with curvature $\kappa_2$, that can be parametrized by
\begin{equation}\label{eq:gammas}
\gamma(s)=c_0+c_1 \cos(\kappa_2 s)+c_2 \sin(\kappa_2 s),\quad c_0,c_1,c_2\in\r^3
\end{equation}
with
$$
|c_1|=|c_2|=\frac{1}{\kappa_2}\,,\quad \langle c_1,c_2\rangle=0\,.
$$
Let $p_0\in M$ be an arbitrary point and let $\sigma(u)$ be an integral curve of $X_1$ with $\sigma(0)=p_0$. Consider the flow $\phi$
of the vector field $X_2$ near the point $p_0$. Then, for all $u\in(-\delta,\delta)$ and for all $s\in(-\varepsilon,\varepsilon)$,
$$
\phi_{\sigma(u)}(s)=c_0(u)+c_1(u) \cos(\kappa_2(u) s)+c_2(u) \sin(\kappa_2(u) s),
$$
where the vectorial functions $c_0(u),c_1(u),c_2(u)$, which are uniquely determined by their initial conditions, satisfy
$$
\sigma(u)=c_0(u)+c_1(u)\,,\quad|c_1(u)|=|c_2(u)|=\frac{1}{\kappa_2(u)}\,,\quad \langle c_1(u),c_2(u)\rangle=0\,,
$$
while $\kappa_2(u)=\kappa_2(\sigma(u))$.
Thus, locally, the surface can be parametrized by
$$
X(u,s)=\phi_{\sigma(u)}(s)\,.
$$
Now, since $\kappa_2(0)>0$, there exists $\delta'>0$ such that for $u\in(-\delta',\delta')$, we have $\kappa_2(u)>\kappa_2(0)/2$. Then we can reparametrize $X(u,s)$  using the change of parameter
$$(u,s)\to (u,v=\kappa_2(u) s),$$
where $v$ is defined  in a interval which includes $(-\kappa_2(0)\varepsilon/2,\kappa_2(0)\varepsilon/2)$.
With respect to the above change of parameters, the parametrization of the surface becomes
$$
X(u,v)=c_0(u)+\frac{1}{\kappa_2(u)}\left(C_1(u) \cos(v)+C_2(u) \sin(v)\right),
$$
where
$$
C_1(u)=\kappa_2(u) c_1(u)\,, \quad C_2(u)=\kappa_2(u) c_2(u)\,.
$$
Since the integral curves of $X_2$ start (at $v=0$) from $\sigma$, we have
$$
\sigma(u)=X(u,0)=c_0(u)+\frac{1}{\kappa_2(u)} C_1(u).
$$
From this
\begin{equation}\label{eq:Xuv}
X(u,v)=\sigma(u)+\frac{1}{\kappa_2(u)}\big(C_1(u) (\cos v -1)+C_2(u) \sin v \big).
\end{equation}
Using \eqref{eq:gammas} we find
$$
C_2=\kappa_2 c_2=\gamma'(0)=X_2(\gamma(0)),
$$
which implies that $C_2(u)=X_2(\sigma(u))$. Using  \eqref{eq:gammas} again, we get
$$
- \kappa_2^2\, c_1=\gamma''(0)=\kappa_2(\gamma(0))\, \xi(\gamma(0))=\kappa_2(u)\, \xi(\sigma(u)),
$$
which implies that $C_1(u)=-\xi(\sigma(u))$.
Now we shall prove that $C_1(u)$ and $C_2(u)$ are, in fact, constant vectors. Indeed, taking into account Lemma~\ref{lem-nablax2xi},(d),
$$
\frac{d C_1}{du}=-\overline{\nabla}_{\sigma'}\xi=-\overline{\nabla}_{X_1}\xi=0.
$$
Moreover, using \eqref{eq:connection-r3},
$$
\frac{d C_2}{du}=\overline{\nabla}_{\sigma'}X_2=\overline{\nabla}_{X_1}X_2=0.
$$
Thus the image of the parametrization \eqref{eq:Xuv} is given by a 1-parameter family of circles passing through the points of $\sigma(u)$ lying in affine planes parallel to the space spanned by $C_1$ and $C_2$.

To finish the proof we need to show that the curve of the centers of the circles is a line orthogonal to $C_1$ and $C_2$.
The parametrization \eqref{eq:Xuv} can be written as
$$
X(u,v)=\beta(u)+\frac{1}{\kappa_2(u)}\big(C_1\cos v +C_2 \sin v \big),
$$
where
$$
\beta(u)=\sigma(u)-\frac{C_1}{\kappa_2(u)}
$$
is the curve of the centers. Let show that $\beta$ is a line. For this we prove that $\beta'\wedge \beta''=0$.
Since
 $$
 \sigma''(u)=-\frac{f(u)}{2}\, \eta(\sigma(u)),
 $$
 where $f(u)=f(\sigma(u))$ and $X_1\wedge X_2=\eta$, we have
\begin{eqnarray*}
\beta'\wedge \beta''&=& \left(\sigma'-\left(\frac{1}{\kappa_2}\right)' C_1\right)\wedge\left(\sigma''-\left(\frac{1}{\kappa_2}\right)''C_1\right)\\
&=& -\frac{f}{2} X_1\wedge \eta + \left(\frac{1}{\kappa_2}\right)'' X_1\wedge\xi-\frac{f}{2} \left(\frac{1}{\kappa_2}\right)' \xi\wedge\eta\\
\rm(using\;  \eqref{eq-defxi})&=&\left(\frac{f}{2}-3\frac{f}{2} \left(\frac{1}{\kappa_2}\right)'' \left(\frac{1}{\kappa_2}\right)+\frac{3}{4} \frac{X_1f}{2}\left(\frac{1}{\kappa_2}\right)\left(\frac{1}{\kappa_2}\right)'  \right)X_2\,.
\end{eqnarray*}
%{eq:defk2}
Now, replacing \eqref{eq:defk2} in
$$
\left(\frac{f}{2}-3\frac{f}{2} \left(\frac{1}{\kappa_2}\right)'' \left(\frac{1}{\kappa_2}\right)+\frac{3}{4} \frac{X_1f}{2}\left(\frac{1}{\kappa_2}\right)\left(\frac{1}{\kappa_2}\right)'  \right)
$$
and using the identities \eqref{eq:f-prime-equation} and Lemma~\ref{lem-nablax2xi}, (c), we find zero.

Finally, $\beta'$ is clearly orthogonal to $C_2$ and
\begin{eqnarray*}
\langle\beta',C_1\rangle&=&\langle X_1, C_1\rangle-  \left(\frac{1}{\kappa_2}\right)'\\
&=& -\langle X_1, \xi\rangle-  \left(\frac{1}{\kappa_2}\right)'\\
\rm{(using\; \eqref{eq-defxi})}&=& -\frac{1}{\kappa_2}\left(\frac{3}{4} \frac{X_1f}{f}-\frac{\kappa_2'}{\kappa_2}\right)\\
\rm{(using\; Lemma~\ref{lem-nablax2xi}\, (c))}&=&0\,.
\end{eqnarray*}
\end{proof}

%We now show that the parametrization $X(u,v)$ is principal. For this, since the surfaces does not have umbilic points,  we need to show that the mixed coefficients of the first and second fundamental forms are identically zero. From \eqref{eq:Xuv}, we have
%\begin{subequations}
%\begin{equation}\label{eq:Xu}
%X_u= \sigma'+\left(\frac{1}{\kappa_2}\right)'\big(C_1(u) (\cos v -1)+C_2(u) \sin v \big)
%\end{equation}
%\begin{equation}\label{eq:Xv}
%X_v= \left(\frac{1}{\kappa_2}\right)\big(- C_1(u) \sin v +C_2(u) \cos v \big)\,.
%\end{equation}
%\end{subequations}
%Then,
%\begin{eqnarray*}
%\langle X_u, X_v \rangle&=&\frac{1}{\kappa_2} \big(
%\langle \sigma', C_2 \rangle \cos v - \langle \sigma', C_1 \rangle \sin v - \frac{\kappa_2'}{\kappa_2^2} \sin v
%\big)\\
%&=& \frac{1}{\kappa_2} \big(\langle X_1, X_2 \rangle_{\sigma(u)} \cos v + \langle X_1, \xi \rangle_{\sigma(u)} \sin v - \frac{\kappa_2'}{\kappa_2^2} \sin v\big)\\
%\rm(using\;  \eqref{eq-defxi})&=&
% \frac{1}{\kappa_2^2} \left( \frac{3 X_1f}{4 f}-\frac{X_1\kappa_2}{\kappa_2}\right)\\
%\rm(using\; Lemma~\ref{lem-nablax2xi} (c)) &=& 0\,,
%\end{eqnarray*}
%while
%$$
%\langle X_{v,u},\eta \rangle=-\frac{\kappa_2'}{\kappa_2}\langle X_v,\eta \rangle=0\,.
%$$
%
\section{Biconservative surfaces in $\s^3$}

In this section we consider biconservative surfaces in $3$-dimensional sphere $\s^3$. We assume that the surface is not CMC and thus we can choose $f$ to be positive and $\grad f\neq 0$ at any point of the surface. We define the local orthonormal frame $\{X_1,X_2\}$ as
in \eqref{eq:def-x1-x2} and we look at $\s^3$ as a submanifold of $\r^4$. With this in mind and denoting by $\nabla, \nabla^{\s^3}$ and $\overline{\nabla}$ the connections of $M$, $\s^3$ and $\r^4$, respectively, we have at a point $\mathbf r \in M \subset \s^3 \subset \r^4$

\begin{equation}\label{eq:connection-s3}
\left\{\begin{array}{ll}
\nabla^{\s^3}_{X_1}X_1=-\dfrac{f}{2}\eta,&\quad \nabla^{\s^3}_{X_1}X_2=0,\\
&\\
\nabla^{\s^3}_{X_2}X_1=-\dfrac{3(X_1f)}{4f}X_2,&\quad \nabla^{\s^3}_{X_2}X_2=\dfrac{3(X_1f)}{4f}X_1+\dfrac{3f}{2}\eta,\\
%&\\
% \nabla^{\s^3}_{X_1}\eta=\dfrac{f}{2}X_1&\quad \nabla^{\s^3}_{X_2}\eta=-\dfrac{3f}{2}X_2\,.
\end{array}
\right.
\end{equation}
and
\begin{equation}\label{eq:connection-r4}
\left\{\begin{array}{ll}
\overline{\nabla}_{X_1}X_1=-\dfrac{f}{2}\eta-{{\mathbf r}},&\quad \overline{\nabla}_{X_1}X_2=0,\\
&\\
\overline{\nabla}_{X_2}X_1=-\dfrac{3(X_1f)}{4f}X_2,&\quad \overline{\nabla}_{X_2}X_2=\dfrac{3(X_1f)}{4f}X_1+\dfrac{3f}{2}\eta-{{\mathbf r}},\\
&\\
 \overline{\nabla}_{X_1}\eta=\dfrac{f}{2}X_1,&\quad \overline{\nabla}_{X_2}\eta=-\dfrac{3f}{2}X_2\,,
\end{array}
\right.
\end{equation}
where $\eta$ is a unit vector field  normal  to the surface $M$ in $\s^3$.
Put
\begin{equation}\label{eq-defxis3}
\kappa_2\, \xi=\dfrac{3(X_1f)}{4f}X_1+\dfrac{3f}{2}\eta-{\mathbf r}=\overline{\nabla}_{X_2}X_2
\end{equation}
where
\begin{equation}\label{eq:defk2s3}
\kappa_2=\sqrt{\dfrac{9(X_1f)^2}{16f^2}+\dfrac{9f^2}{4}+1}.
\end{equation}
 We have the following analogue of Lemma~\ref{lem-nablax2xi}.

\begin{lemma}\label{lem-nablax2xi-s3}
The function $\kappa_2$ and the vector field $\xi$ satisfy
\begin{itemize}
\item[(a)] $X_2\kappa_2=0$;\vspace{2mm}
\item[(b)] $\overline{\nabla}_{X_2}\xi=-\kappa_2\,X_2$;\vspace{2mm}
\item[(c)] $4 (X_1\kappa_2)/\kappa_2=3 (X_1f)/f$;\vspace{2mm}
\item[(d)] $\overline{\nabla}_{X_1}\xi=0$.
\end{itemize}
\end{lemma}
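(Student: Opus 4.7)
I would follow the same outline as the proof of Lemma~\ref{lem-nablax2xi}, adapting each step to the $\s^3$-setting. The two new ingredients to keep track of are that the ambient connection $\overline{\nabla}$ on $\r^4$ satisfies $\overline{\nabla}_X \mathbf{r} = X$ for any $X$ tangent to $\s^3$, together with $\mathbf{r}\perp X_1,X_2,\eta$ and $|\mathbf{r}|=1$, and that the extra constant $1$ under the square root in \eqref{eq:defk2s3} (which makes $\xi$ a unit vector) reflects the ambient sectional curvature $c=1$. The latter is precisely what couples the computation to the $c=1$ instance of \eqref{eq:f-prime-equation}, provided by Theorem~\ref{teo:bi-conservative-laplacian}.

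For (a), I would first note that $X_2 f = 0$ by \eqref{eq:x2f0} and that the bracket $[X_1,X_2] = \tfrac{3(X_1 f)}{4f}X_2$ is read off from \eqref{eq:connection-s3}, so $X_2 X_1 f = X_1 X_2 f - [X_1,X_2]f = 0$; since $\kappa_2$ is a function only of $f$ and $X_1 f$, (a) follows. For (b), I would compute $\overline{\nabla}_{X_2}(\kappa_2\xi)$ from the explicit expression \eqref{eq-defxis3} using \eqref{eq:connection-r4} and $\overline{\nabla}_{X_2}\mathbf{r} = X_2$. By (a) the derivatives of the coefficients in the $X_2$-direction vanish, and the remaining contributions collapse to
$$
-\Bigl(\tfrac{9(X_1 f)^2}{16 f^2} + \tfrac{9 f^2}{4} + 1\Bigr) X_2 = -\kappa_2^2\, X_2.
$$
Using (a) once more to move $\kappa_2$ past the derivative then yields (b); in particular this confirms the geometric interpretation that the integral curves of $X_2$ are circles in $\r^4$ of curvature $\kappa_2$.

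For (c), I would differentiate \eqref{eq:defk2s3} along $X_1$ and clear denominators. A direct computation shows that $4(X_1\kappa_2)/\kappa_2 = 3(X_1 f)/f$ is equivalent to
$$
f\,(X_1 X_1 f) - \tfrac{7}{4}(X_1 f)^2 - \tfrac{4}{3} f^2 + f^4 = 0,
$$
which is \eqref{eq:f-prime-equation} with $c = 1$, hence holds by Theorem~\ref{teo:bi-conservative-laplacian}. For (d), I would expand $\overline{\nabla}_{X_1}(\kappa_2\xi)$ using \eqref{eq:connection-r4} and $\overline{\nabla}_{X_1}\mathbf{r} = X_1$, and compare the result to $(X_1\kappa_2)\xi + \kappa_2 \overline{\nabla}_{X_1}\xi$. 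Once (c) is used, the $\eta$- and $\mathbf{r}$-components match automatically, and the $X_1$-component condition reduces, after substitution of (c), to the very same ODE displayed above; thus $\overline{\nabla}_{X_1}\xi = 0$.

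The main obstacle is purely bookkeeping: one must check that the extra $-\mathbf{r}$ summand in $\kappa_2\xi$ and the extra $+1$ inside $\kappa_2^2$ enter the computation precisely so as to produce the $\tfrac{4}{3} f^2$ term that matches the $c=1$ case of \eqref{eq:f-prime-equation}. Once that reduction is verified, (c) and (d) follow with no new ideas beyond those already employed in Lemma~\ref{lem-nablax2xi}.
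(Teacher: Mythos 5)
Your proposal is correct and follows exactly the route the paper intends: the paper states Lemma~\ref{lem-nablax2xi-s3} without proof as ``the following analogue of Lemma~\ref{lem-nablax2xi}'', and your adaptation — tracking the extra $-\mathbf{r}$ term in \eqref{eq-defxis3}, the $+1$ in \eqref{eq:defk2s3}, and $\overline{\nabla}_X\mathbf{r}=X$, so that (c) and (d) reduce to \eqref{eq:f-prime-equation} with $c=1$ — is precisely that analogue, with all reductions checking out.
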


Now, let $M^2$ be a biconservative surface in $\s^3$ with $f>0$ and $\grad f\neq 0$ at any point. Then, using the same argument as in the proof of Theorem~\ref{teo:r3general}, we find that, locally, $M^2\subset\r^4$ can be parametrized by
\begin{equation}\label{eq:xuvs3}
X(u,v)=\sigma(u)+\frac{1}{\kappa_2(u)}\big(C_1(u) (\cos v-1) +C_2(u) \sin v \big),
\end{equation}
where $\sigma(u)$ is an integral curve of $X_1$, $\kappa_2(u)=\kappa_2(\sigma(u))$ is the curvature of the integral curves
of $X_2$, which are circles in $\r^4$, and $C_1,C_2$ are two vector functions such that $|C_1|=|C_2|=1$ and $\langle C_1,C_2\rangle=0$. Moreover,
\begin{equation}\label{eq:c1-c2-s3-value}
C_1(u)=-\xi(\sigma(u)),\quad C_2(u)=X_2(\sigma(u))\,.
\end{equation}
Further, it is easy to see that $C_1$ and $C_2$ are constant vectors. Then, it is clear from \eqref{eq:xuvs3} that locally the surface $M^2$ is  given by a family of circles of $\r^4$, passing through the curve $\sigma$,  and belonging to a pencil of planes which are parallel to the linear space spanned by $C_1$ and $C_2$. Now, these circles must be the intersection of the pencil with the sphere $\s^3$. Let $G$ be the 1-parameter group of isometries of $\r^4$ generated by the Killing vector field
$$
T=\langle  {\mathbf r}, C_2\rangle C_1+\langle  {\mathbf r}, C_1\rangle C_2\,.
$$ 
Then $G$ acts also on $\s^3$ by isometries and it can be identified with the group $SO(2)$. Since the orbits
of $G$ are circles of $\s^3$ we deduce that $X(u,v)$, in \eqref{eq:xuvs3}, describes an $SO(2)$ invariant surface of $\s^3$ obtained by the action of $G$ on the curve $\sigma$. Moreover, we can give the following explicit construction.
\begin{theorem}\label{teo:class-bicons-s3}
Let $M^2$ be a biconservative surface in $\s^3$ with $f>0$ and $\grad f\neq 0$ at any point. Then, 
 locally, $M^2\subset\r^4$ can be parametrized by
\begin{equation}\label{eq:xuvs3-s2}
X_C(u,v)=\sigma(u)+\frac{4}{3 \sqrt{C} k(u)^{3/4}}\big(C_1 (\cos v-1) +C_2 \sin v \big),
\end{equation}
where  $C$ is a positive constant of integration, $C_1,C_2\in\r^4$ are two constant orthonormal vectors such that
\begin{equation}\label{eq:c1c2s3}
\langle \sigma(u), C_1\rangle =\frac{4}{3 \sqrt{C} k(u)^{3/4}}\,,\quad \langle \sigma(u), C_2\rangle=0\,,
\end{equation}
while $\sigma=\sigma(u)$ is a curve lying in the totally geodesic $\s^2=\s^3\cap\Pi$ ($\Pi$ the linear hyperspace of $\r^4$ orthogonal to $C_2$), whose geodesic  curvature $k=k(u)$ is a positive non constant solution of
the following ODE
\begin{equation}\label{eq:sigma-k-s2}
k'' k =\frac{7}{4} (k')^2+\frac{4}{3} k^2-4 k^4\,.
\end{equation}
\end{theorem}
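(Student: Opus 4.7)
My plan builds on the parametrization \eqref{eq:xuvs3}, already derived in the paragraph immediately preceding the theorem by the same argument as in Theorem~\ref{teo:r3general}, which expresses $M^2\subset\r^4$ in terms of a curve $\sigma$ (integral curve of $X_1$) and two constant orthonormal vectors $C_1=-\xi(\sigma(u))$, $C_2=X_2(\sigma(u))$. What remains is threefold: to identify $\sigma$ with a curve in the totally geodesic $\s^2=\s^3\cap\Pi$ by establishing the inner-product relations \eqref{eq:c1c2s3}; to recognise the coefficient $1/\kappa_2(u)$ as $4/(3\sqrt{C}\,k(u)^{3/4})$ for a positive integration constant $C$; and to derive the ODE \eqref{eq:sigma-k-s2} for the geodesic curvature $k$ of $\sigma$.

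For the first task, I would exploit that at any $\sigma(u)$ the position vector ${\mathbf r}=\sigma(u)$ is orthogonal to $T_{\sigma(u)}\s^3=\mathrm{span}\{X_1,X_2,\eta\}$ in $\r^4$, so $\langle \sigma,C_2\rangle=\langle\sigma,X_2(\sigma)\rangle=0$ gives the second identity of \eqref{eq:c1c2s3}. Substituting the same orthogonality relations and $|\sigma|^2=1$ into \eqref{eq-defxis3} yields $\langle \sigma,\xi\rangle=-1/\kappa_2$, and hence $\langle \sigma,C_1\rangle=1/\kappa_2$, which becomes the first identity of \eqref{eq:c1c2s3} once $\kappa_2$ has been rewritten in terms of $k$. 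Because $\sigma$ lies in $\Pi=\{C_2\}^\perp$, it lies in the totally geodesic $\s^2=\s^3\cap\Pi$; at $\sigma(u)$ the tangent plane $T_{\sigma(u)}\s^2$ is $\mathrm{span}\{X_1,\eta\}$, and since $\nabla^{\s^3}_{X_1}X_1=-(f/2)\eta$ already lies in this subspace, the geodesic curvature of $\sigma$ inside $\s^2$ equals $k=f/2$.

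For the second and third tasks I would integrate \eqref{eq:f-prime-equation} (with $c=1$). Setting $g=X_1f$ and using that $g$ depends only on $f$ along $\sigma$ (a consequence of $X_2f=0$), the equation becomes linear in $y=g^2$, namely $y'-(7/(2f))y=(8f/3)-2f^3$, which integrates, via the integrating factor $f^{-7/2}$, to $(X_1f)^2=\widetilde{C}\,f^{7/2}-4f^4-(16/9)f^2$ for a positive constant $\widetilde{C}$. Plugging this into \eqref{eq:defk2s3} produces the pleasant cancellation $\kappa_2^2=(9\widetilde{C}/16)\,f^{3/2}$; with $f=2k$ and $C:=2\sqrt{2}\,\widetilde{C}$, this gives $1/\kappa_2=4/(3\sqrt{C}\,k^{3/4})$, matching \eqref{eq:xuvs3-s2}. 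The same substitution $f=2k$ applied to \eqref{eq:f-prime-equation} (with $X_1f=f'$, $X_1X_1f=f''$, since $u$ is arc length on $\sigma$) produces \eqref{eq:sigma-k-s2}, and the hypothesis $\grad f\neq 0$ forces $k$ to be non-constant.

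The main obstacle is the somewhat unmotivated nature of the first integral: a priori $\kappa_2^2$ mixes $(X_1f)^2/f^2$ with $f^2$ and a constant, and it is only after writing down the explicit integrating factor $f^{-7/2}$ that one sees the three contributions collapse to a single power of $f$. Once this collapse is observed, the identification $k=f/2$ and the ODE \eqref{eq:sigma-k-s2} follow almost mechanically from the substitution $f=2k$.
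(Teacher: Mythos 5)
Your proposal is correct and follows essentially the same route as the paper: start from the parametrization \eqref{eq:xuvs3}, use the orthogonality of the position vector to $T\s^3$ to get \eqref{eq:c1c2s3} and to place $\sigma$ in the totally geodesic $\s^2$ with $k=f/2$, then integrate \eqref{eq:f-prime-equation} once to a first integral that collapses $\kappa_2^2$ to $\tfrac{9C}{16}k^{3/2}$. The only cosmetic difference is that you carry out the integration in the variable $f$ and convert to $k$ at the end, whereas the paper states the prime integral \eqref{eq-promeint-ks3} directly in terms of $k$; the computations agree.
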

\begin{proof}
From \eqref{eq:xuvs3} we know that 
$$
X(u,v)=\sigma(u)+\frac{1}{\kappa_2(u)}\big(C_1 (\cos v-1) +C_2 \sin v \big)\,,
$$
Since 
$$
\langle \sigma(u), C_2\rangle=\langle \sigma(u), X_2(\sigma(u))\rangle=0,
$$
we deduce that $\sigma\subset\Pi$, where $\Pi$ is the hyperplane of $\r^4$  defined by the equation $\langle {\mathbf r},C_2\rangle=0$. Thus $\sigma$ is a curve in $\s^3\cap \Pi=\s^2$, where $\s^2$ is a totally geodesic 2-sphere of $\s^3$. Now, let $k$ denote the geodesic curvature of $\sigma$ in $\s^2$. Then, taking into account \eqref{eq:connection-s3}, we have
$$
\nabla^{\s^2}_{\sigma'}\sigma'=\nabla^{\s^3}_{\sigma'}\sigma'=-\frac{f(u)}{2}\, \eta(\sigma(u))\,,
$$
where $f(u)=f\circ \sigma (u)$. We deduce that $k(u)=|\nabla^{\s^2}_{\sigma'}\sigma'|=f(u)/2$. From \eqref{eq:f-prime-equation}, with $c=1$, we know that $f=f(u)$ is a solution of 
$$
f'' f = \frac{7}{4} (f')^2+ \frac{4}{3}f^2-f^4\,,
$$
which implies that $k=k(u)$ is a solution of \eqref{eq:sigma-k-s2}. To finish we have to compute $\kappa_2(u)$ as a function of $k(u)$. 
First, by a standard argument, we find that \eqref{eq:sigma-k-s2} has the prime integral,
\begin{equation}\label{eq-promeint-ks3}
(k')^2=-\frac{16}{9} k^2-16 k^4+Ck^{7/2}\,,\quad C\in\r,\, C>0\,.
\end{equation}
Substituting \eqref{eq-promeint-ks3} in \eqref{eq:defk2s3} we find
$$
\kappa_2(u)=\frac{3}{4} \sqrt{C} k(u)^{3/4}\,.
$$
Finally, using the value of $C_1$ in \eqref{eq:c1-c2-s3-value} and that of $\xi$ in \eqref{eq-defxis3}, we get
$$
\langle \sigma(u), C_1 \rangle=\langle \sigma(u), -\xi(\sigma(u)) \rangle=\frac{1}{\kappa_2(u)}=\frac{4}{3 \sqrt{C} k(u)^{3/4}}\,.
$$
\end{proof}

\begin{remark}\label{remark-existences3}
Theorem~\ref{teo:class-bicons-s3} asserts that if $M^2$ is a biconservative surface of $\s^3$, then, locally, it is an $SO(2)$-invariant surface whose profile curve $\sigma$ satisfies \eqref{eq:c1c2s3} and \eqref{eq:sigma-k-s2}.  It is worth to show that such a curve exists. 

First, the condition in Theorem~\ref{teo:class-bicons-s3} that $k$ is a positive non constant solution of 
\eqref{eq:sigma-k-s2} is not restrictive. In fact, choosing the initial condition $k(u_0)>0$ and $k'(u_0)>0$,
from Picard's theorem there is a unique solution of \eqref{eq:sigma-k-s2} which is positive and non constant in an open interval containing $u_0$.

Next,  let assume that $C_1=e_3$ and 
$C_2=e_4$, where $\{e_1,\ldots,e_4\}$ is the canonical basis of $\r^4$. Then, using \eqref{eq:c1c2s3}, $\sigma$ can be explicitly described as
\begin{equation}\label{eq:sigma-xy}
\sigma(u)=(x(u),y(u),\frac{4}{3 \sqrt{C}}\, k(u)^{-3/4},0)\,,
\end{equation}
for some functions $x(u)$ and $y(u)$. Since $\sigma$ is parametrized by arc-length and its curvature must be the given function $k$ (i.e. $\sigma''=-k\,\eta-{\mathbf r}$),
the functions  $x=x(u)$ and $y=y(u)$ must satisfy the system
\begin{equation}\label{eq:sigma-sistem-s3-1}
\begin{cases}
x^2+y^2+\dfrac{16}{9 C}\, k^{-3/2}=1\vspace{3mm}\\
(x')^2+(y')^2+\dfrac{16}{9 C}\,\left(\left( k^{-3/4}\right)'\right)^2=1\vspace{3mm}\\
(x'')^2+(y'')^2+\dfrac{16}{9 C}\,\left(\left( k^{-3/4}\right)''\right)^2=1+k^2\,.
\end{cases}
\end{equation}
Taking the derivative and using \eqref{eq:sigma-k-s2}-\eqref{eq-promeint-ks3}, system~\eqref{eq:sigma-sistem-s3-1} becomes
\begin{equation}\label{eq:sigma-sistem-s3-2}
\begin{cases}
x^2+y^2+\dfrac{16}{9 C}\, k^{-3/2}=1\vspace{3mm}\\
(x')^2+(y')^2=\dfrac{16}{9 C}\,(1+9k^2)\, k^{-3/2}\vspace{3mm}\\
(x'')^2+(y'')^2+\dfrac{16}{9 C}\,(1-3k^2)^2\, k^{-3/2}=1+k^2\,.
\end{cases}
\end{equation}
Now, since $k'\neq 0$, we can locally invert the function $k=k(u)$ and write $u=u(k)$. Then System~\eqref{eq:sigma-sistem-s3-2} becomes
\begin{equation}\label{eq:sigma-sistem-s3-3}
\begin{cases}
x^2+y^2+\dfrac{16}{9 C}\, k^{-3/2}=1\vspace{3mm}\\
(k')^2\left(\dfrac{dx}{dk}\right)^2+(k')^2\left(\dfrac{dy}{dk}\right)^2=\dfrac{16}{9 C}\,(1+9k^2)\, k^{-3/2}\vspace{3mm}\\
\left(\dfrac{d^2x}{dk^2}(k')^2+\dfrac{dx}{dk}k''\right)^2+\left(\dfrac{d^2y}{dk^2}(k')^2+\dfrac{dy}{dk}k''\right)^2+\dfrac{16}{9 C}\dfrac{(1-3k^2)^2}{k^{3/2}}=1+k^2\,,
\end{cases}
\end{equation}
where, according to \eqref{eq-promeint-ks3},
$$
(k')^2=-\frac{16}{9} k^2-16 k^4+Ck^{7/2}\,,\quad k''=-\frac{16}{9} k-32 k^3+\frac{7}{4}Ck^{5/2}\,.
$$
From the first equation of \eqref{eq:sigma-sistem-s3-3}, we get
$$
y(k)=\pm\sqrt{1-x(k)^2-\dfrac{16}{9 C}\, k^{-3/2}}\,,
$$
that substituted in the second gives
\begin{eqnarray}\label{eq:dxdks3}
\dfrac{dx}{dk}&=&\dfrac{12 x(k)}{k (9 C
   k^{3/2}-16)}\nonumber\\
   &&\pm\dfrac{36
   \sqrt{-9 C k^{3/2} x(k)^2+9 C k^{3/2}-16}}{(9 C
   k^{3/2}-16) \sqrt{9 C k^{3/2}-144 k^2-16}}\,.
\end{eqnarray}
We note that $dx/dk\neq 0$. In fact, if it were zero, from \eqref{eq:dxdks3},  we should have $x(k)=\pm 3k/\sqrt{1+9k^2}$ which is not constant.
Taking the derivative of  \eqref{eq:dxdks3} with respect to $k$ and replacing in it the value  $dx/dk$ given in \eqref{eq:dxdks3} we find that $d^2x/dk^2$ depends only on $x(k)$ and $k$. In the same way we find that  
$dy/dk$ and $d^2y/dk^2$ depend only on $x(k)$ and $k$. 
%\begin{eqnarray}\label{eq:d2xdk2s3}
%\dfrac{dx}{dk}&=&\frac{6 \left(81 c k^{9/2} \left(9 c k^{3/2}-168 k^2-8\right)
%   x(k)^2\right)}{k^2 \left(9 c
%   k^{3/2}-16\right) \left(-9 c k^{3/2}+144 k^2+16\right) \sqrt{k^4
%   \left(-9 c k^{3/2}+144 k^2+16\right) \left(9 c k^{3/2} x(k)^2-9
%   c k^{3/2}+16\right)}}\nonumber\\
%   &&\nonumber\\
%   &&  \frac{6 \left(\left(45 c k^{3/2}-56 \left(9 k^2+1\right)\right) x(k)
%   \sqrt{k^4 \left(-9 c k^{3/2}+144 k^2+16\right) \left(9 c k^{3/2}
%   x(k)^2-9 c k^{3/2}+16\right)})\right)}{k^2 \left(9 c
%   k^{3/2}-16\right) \left(-9 c k^{3/2}+144 k^2+16\right) \sqrt{k^4
%   \left(-9 c k^{3/2}+144 k^2+16\right) \left(9 c k^{3/2} x(k)^2-9
%   c k^{3/2}+16\right)}}\nonumber\\
%&& \frac{6 \left(9 k^3 \left(9 c k^{3/2}-16\right)
%   \left(-9 c k^{3/2}+168 k^2+8\right)\right)}{k^2 \left(9 c
%   k^{3/2}-16\right) \left(-9 c k^{3/2}+144 k^2+16\right) \sqrt{k^4
%   \left(-9 c k^{3/2}+144 k^2+16\right) \left(9 c k^{3/2} x(k)^2-9
%   c k^{3/2}+16\right)}}\nonumber
%  \end{eqnarray}
Finally, substituting in the third equation of system~\eqref{eq:sigma-sistem-s3-3} the values of 
$dx/dk$, $dy/dk$, $d^2x/dk^2$, $d^2y/dk^2$, $k'$ and $k''$  we find an identity. This means that the solution $x(k)$ of \eqref{eq:dxdks3} and the corresponding $y(k)$ give a curve $\sigma$, as described in \eqref{eq:sigma-xy}, which satisfies all the desired conditions.

Now, although we could not find an explicit solution of \eqref{eq:sigma-k-s2}, which would give the curvature of the profile curve $\sigma$,
using Mathematica we were able to plot a numerical solution as shown in Figure~\ref{fig-plot-k-s3}.

\begin{figure}[h!]
\begin{pspicture}(-5,-.5)(5,5.5)
\psset{xunit=1cm,yunit=1cm,linewidth=.03,arrowscale=2}
%\psaxes[labels=none,ticks=none]{->}(0,0)(-4,-0.2)(4,3)
\put(-3,.5){\includegraphics[width=5.5cm]{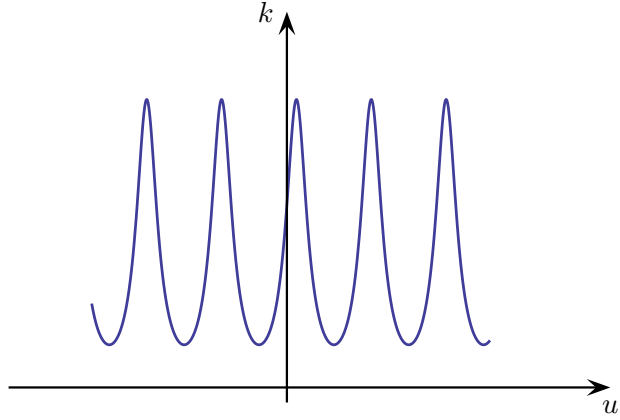}}
\psline{->}(-4,0)(4,0)
\psline{->}(-.3,-.2)(-.3,5)
\uput[270](4,0){$u$}
\uput[180](-.3,5){$k$}
%\put(2.8,5.6){$C=1$}
%\put(2.8,4.6){$C=1.5$}
%\put(2.8,3.8){$C=2$}
%\put(1.9,5.6){$c=0.5$}
%\put(1.3,5.1){$c=0$}
%\put(-0.1,5.8){$p$}
%\psdots[dotsize=0.15](-0.19,6.0)
\end{pspicture}
\caption{\label{fig-plot-k-s3}  Plot of a numerical solution of \eqref{eq:sigma-k-s2}
with $k(0)=1$ and $k'(0)=1$. The constant of integration is, in this case, $C=169/9$.}

\end{figure}

%\begin{figure}[h!]
%\includegraphics[width=5.5cm]{ksols3.eps}
%\caption{\label{fig-plot-k-s3}  Plot of a numerical solution of \eqref{eq:sigma-k-s2}.}
%
%\end{figure}
\end{remark}

\section{Biconservative surfaces in the hyperbolic space}

Let $\l^4$ be the 4-dimensional Lorentz-Minkowski space, that is, the real vector space $\r^4$ endowed with the Lorentzian metric tensor $\langle,\rangle$ given by
$$
\langle,\rangle= dx_1^2 +dx_2^2 +dx_3^2 -dx_4^2,
$$
where $(x_1,x_2,x_3,x_4)$ are the canonical coordinates of $\r^4$. The 3-dimensional unitary hyperbolic
space is given as the following hyperquadric of $\l^4$,
$$
\h^3 = \{(x_1,x_2,x_3,x_4)\in\l^4\,:\, x_1^2 +x_2^2 +x_3^2 -x_4^2 = -1,\, x_4 > 0\}.
$$
As it is well known, the induced metric on $\h^3$ from $\l^4$ is Riemannian  with constant sectional curvature $-1$. In this section we shall use this model of the hyperbolic space.
For convenience we shall recall that, if $X,Y$ are tangent vector fields to $\h^3$, then 
$$
\overline{\nabla}_{X}Y=\nabla^{\h^3}_XY+\langle X,Y\rangle {\mathbf r}
$$
where $\overline{\nabla}$ is the connection on $\l^4$, $\nabla^{\h^3}$ is that of $\h^3$, while ${\mathbf r}$ is the position vector of a point $\mathbf r \in M \subset \h^3 \subset \l^4$.

Let $M^2$ be a  biconservative surface in the $3$-dimensional hyperbolic space $\h^3$. We assume that the surface is not CMC and thus we can choose $f$ to be positive and $\grad f\neq 0$ at any point of the surface. We define again the local orthonormal frame $\{X_1,X_2\}$ as
in \eqref{eq:def-x1-x2}.  We have 

\begin{equation}\label{eq:connection-h3}
\left\{\begin{array}{ll}
\nabla^{\h^3}_{X_1}X_1=-\dfrac{f}{2}\eta,&\quad \nabla^{\h^3}_{X_1}X_2=0,\\
&\\
\nabla^{\h^3}_{X_2}X_1=-\dfrac{3(X_1f)}{4f}X_2,&\quad \nabla^{\h^3}_{X_2}X_2=\dfrac{3(X_1f)}{4f}X_1+\dfrac{3f}{2}\eta,\\
%&\\
% \nabla^{\s^3}_{X_1}\eta=\dfrac{f}{2}X_1&\quad \nabla^{\s^3}_{X_2}\eta=-\dfrac{3f}{2}X_2\,.
\end{array}
\right.
\end{equation}
and
\begin{equation}\label{eq:connection-r4lorentz}
\left\{\begin{array}{ll}
\overline{\nabla}_{X_1}X_1=-\dfrac{f}{2}\eta+{{\mathbf r}},&\quad \overline{\nabla}_{X_1}X_2=0,\\
&\\
\overline{\nabla}_{X_2}X_1=-\dfrac{3(X_1f)}{4f}X_2,&\quad \overline{\nabla}_{X_2}X_2=\dfrac{3(X_1f)}{4f}X_1+\dfrac{3f}{2}\eta+{{\mathbf r}},\\
&\\
 \overline{\nabla}_{X_1}\eta=\dfrac{f}{2}X_1,&\quad \overline{\nabla}_{X_2}\eta=-\dfrac{3f}{2}X_2\,,
\end{array}
\right.
\end{equation}
where $\eta$ is  a unit  vector field normal to the surface $M$ tangent to $\h^3$.
%and ${{\mathbf r}}$ is the position vector of $\h^3$ in $\r^4$.
Put
\begin{equation}\label{eq-defxih3}
\kappa_2\, \xi=\overline{\nabla}_{X_2}X_2=\dfrac{3(X_1f)}{4f}X_1+\dfrac{3f}{2}\eta+{\mathbf r}
\end{equation}
where
\begin{equation}\label{eq:defk2h3}
\kappa_2=\sqrt{\left|\dfrac{9(X_1f)^2}{16f^2}+\dfrac{9f^2}{4}-1\right|}.
\end{equation}

Differently from the case of surfaces in $\r^3$ or in $\s^3$, in this case  the quantity
$$
W=\dfrac{9(X_1f)^2}{16f^2}+\dfrac{9f^2}{4}-1=\dfrac{9|\grad f|^2}{16f^2}+\dfrac{9f^2}{4}-1
$$
can take both positive and negative values. 
Taking this in consideration, we have the following analogue of Lemma~\ref{lem-nablax2xi}.

\begin{lemma}\label{lem-nablax2xi-h3}
The function $\kappa_2$ and the vector field $\xi$ satisfy
\begin{itemize}
\item[(a)] $X_2\kappa_2=0$;\vspace{2mm}
\item[(b)] $\overline{\nabla}_{X_2}\xi=-\varepsilon \kappa_2\,X_2$;\vspace{2mm}
\item[(c)] $4 (X_1\kappa_2)/\kappa_2=3 (X_1f)/f$;\vspace{2mm}
\item[(d)] $\overline{\nabla}_{X_1}\xi=0$,
\end{itemize}
where $\varepsilon$ is $1$ when $W>0$ and is $-1$ when $W<0$.
\end{lemma}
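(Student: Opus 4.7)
The plan is to mimic the proof of Lemma~\ref{lem-nablax2xi}, adapting for two features coming from the Lorentzian ambient $\l^4$: first, the position vector contributes $\overline{\nabla}_X \mathbf{r} = X$ for any tangent $X$, adding an extra term on the right of each covariant derivative we compute; second, the absolute value in \eqref{eq:defk2h3} forces the sign $\varepsilon = \mathrm{sgn}(W)$ to appear in (b). I would restrict to a neighborhood on which $W$ has constant sign, so that $\kappa_2$ is smooth and $\varepsilon$ is a well-defined constant.

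For (a), the Codazzi computation carried out inside the proof of Theorem~\ref{teo:bi-conservative-laplacian} is insensitive to the sign of $c$, so I still get $[X_1,X_2] = \tfrac{3(X_1 f)}{4f}X_2$; combined with $X_2 f = 0$, this gives $X_2(X_1 f) = 0$, and since $\kappa_2$ depends only on $f$ and $X_1 f$, (a) follows. For (b), I apply $\overline{\nabla}_{X_2}$ to the right-hand side of \eqref{eq-defxih3}, using \eqref{eq:connection-r4lorentz} together with the vanishings from (a): the three contributions collapse to $-\tfrac{9(X_1 f)^2}{16 f^2}X_2 - \tfrac{9 f^2}{4}X_2 + X_2 = -W X_2 = -\varepsilon \kappa_2^2 X_2$, and dividing by $\kappa_2$ gives the stated formula, with the sign $\varepsilon$ appearing because $\kappa_2^2 = |W|$ rather than $W$ itself.

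For (c), I differentiate $\kappa_2^2 = \varepsilon W$ along $X_1$ and substitute \eqref{eq:f-prime-equation} with $c = -1$, namely $f(X_1 X_1 f) = \tfrac{7}{4}(X_1 f)^2 - \tfrac{4}{3}f^2 - f^4$. A short algebraic simplification collapses the result to $X_1 W = \tfrac{3(X_1 f)}{2f}\,W$, hence $X_1 \kappa_2/\kappa_2 = 3(X_1 f)/(4f)$, with the $\varepsilon$ factor cancelling on both sides. For (d), I would expand $\overline{\nabla}_{X_1}(\kappa_2 \xi)$ from \eqref{eq-defxih3} using \eqref{eq:connection-r4lorentz} and $\overline{\nabla}_{X_1}\mathbf{r} = X_1$, and then subtract $(X_1 \kappa_2)\xi$ rewritten via (c). The $\mathbf{r}$- and $\eta$-components should cancel identically, and the $X_1$-component should vanish in force of the same ODE used in (c). The main bookkeeping obstacle is to verify that in (d) the Lorentzian sign in $\overline{\nabla}_{X_1}X_1 = -\tfrac{f}{2}\eta + \mathbf{r}$ (compared to $-\mathbf{r}$ in the spherical case) precisely compensates the sign change $c = 1 \to c = -1$ in the governing ODE, so that no $\varepsilon$ appears in (d); once this is seen the computation reduces to the one already verified for $\s^3$.
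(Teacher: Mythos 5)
Your proposal is correct and follows exactly the route the paper intends: Lemma~5.2 is stated as the analogue of Lemma~\ref{lem-nablax2xi}, to be proved by the same computation with the two modifications you identify (the extra $\overline{\nabla}_X\mathbf{r}=X$ terms and the sign $\varepsilon=\mathrm{sgn}(W)$ coming from $\kappa_2^2=|W|$), and your sign bookkeeping in (b)--(d) checks out, including the cancellation of the $+\tfrac{4}{3}f^2$ term from the $c=-1$ ODE against the $+\tfrac{1}{\kappa_2}$ contribution of $\mathbf{r}$ in the $X_1$-component of (d). The restriction to a neighborhood where $W$ has constant (nonzero) sign is a sensible precision that the paper leaves implicit.
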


As in the case of biconservative surfaces in $\s^3$, we can give the following explicit construction.
\begin{theorem}\label{teo:class-bicons-h3}
Let $M^2$ be a biconservative surface in $\h^3$ with $f>0$ and $\grad f\neq 0$ at any point. Then, 
 locally, $M^2\subset \l^4$ can be parametrized by:
 \begin{enumerate}
 \item[(a)] if $W>0$,
\begin{equation}\label{eq:xuvs3-h31}
X_C(u,v)=\sigma(u)+\frac{4}{3 \sqrt{C} k(u)^{3/4}}\big(C_1 (\cos v-1) +C_2 \sin v \big),
\end{equation}
where  $C$ is a positive constant of integration, $C_1,C_2\in\l^4$ are two constant vectors such that
\begin{equation}\label{eq:c1c2h3}
\langle C_i,C_j\rangle =\delta_{ij}\,,\quad \langle \sigma(u), C_1\rangle =\frac{4}{3 \sqrt{C} k(u)^{3/4}}\,,\quad \langle \sigma(u), C_2\rangle=0\,,
\end{equation}
while $\sigma=\sigma(u)$ is a curve lying in the totally geodesic $\h^2=\h^3\cap\Pi$ ($\Pi$ the linear hyperspace of $\l^4$ defined by $\langle{\mathbf r},C_2\rangle=0$), whose geodesic  curvature $k=k(u)$ is a positive non constant solution of
the following ODE
\begin{equation}\label{eq:sigma-k-h2}
k'' k =\frac{7}{4} (k')^2-\frac{4}{3} k^2-4 k^4\,;
\end{equation}
 \item[(b)]  if $W<0$,
 \begin{equation}\label{eq:xuvh3-h32}
X_C(u,v)=\sigma(u)+\frac{4}{3 \sqrt{-C} k(u)^{3/4}}\big(C_1 (e^v-1) +C_2 (e^{-v} -1)\big),
\end{equation}
where  $C$ is a negative constant of integration, $C_1,C_2\in\l^4$ are two constant vectors such that
\begin{equation}\label{eq:c1c2h3bis}
\langle C_i,C_i\rangle =0\,,\quad\langle C_1,C_2\rangle =-1\,,\quad \langle \sigma(u), C_1\rangle =\langle \sigma(u), C_2\rangle=-\frac{2\sqrt{2}}{3 \sqrt{-C} k(u)^{3/4}}\,,
\end{equation}
while $\sigma=\sigma(u)$ is a curve lying in the totally geodesic $\h^2=\h^3\cap\Pi$ ($\Pi$ the linear hyperspace of $\l^4$ defined by $\langle{\mathbf r},C_1-C_2\rangle=0$), whose geodesic  curvature $k=k(u)$ is a positive non constant solution of \eqref{eq:sigma-k-h2}.
\end{enumerate}
\end{theorem}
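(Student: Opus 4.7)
The plan is to mirror the proof of Theorem~\ref{teo:class-bicons-s3}, splitting into cases~(a) and~(b) according to the sign of $W$. In both cases, if $\gamma(s)$ denotes an arc-length integral curve of $X_2$, then by \eqref{eq-defxih3} and Lemma~\ref{lem-nablax2xi-h3}(a)--(b) we have $\gamma' = X_2$, $\gamma'' = \kappa_2\,\xi$, and hence $\gamma''' = -\varepsilon\,\kappa_2^2\,\gamma'$, where $\varepsilon$ records the sign of $W$.

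In Case~(a) ($\varepsilon = +1$), this ODE integrates to a circle in $\l^4$, $\gamma(s) = c_0 + c_1\cos(\kappa_2 s) + c_2\sin(\kappa_2 s)$, with $\langle c_1, c_1\rangle = \langle c_2, c_2\rangle = 1/\kappa_2^2$ and $\langle c_1, c_2\rangle = 0$. The remainder of the argument is then essentially word-for-word identical to the proof of Theorem~\ref{teo:class-bicons-s3}: after reparametrizing $v = \kappa_2 s$ and rescaling $C_i = \kappa_2\,c_i$, I would use Lemma~\ref{lem-nablax2xi-h3}(c)--(d) to show that $C_1, C_2$ are constant vectors in $\l^4$; derive the ODE~\eqref{eq:sigma-k-h2} for the geodesic curvature $k = f/2$ of $\sigma$ by applying Theorem~\ref{teo:bi-conservative-laplacian} with $c=-1$; use the prime integral $(k')^2 = Ck^{7/2} + \tfrac{16}{9}k^2 - 16 k^4$ (with $C>0$) to obtain $\kappa_2 = (3/4)\sqrt{C}\,k^{3/4}$; and deduce that $\sigma$ lies in the totally geodesic $\h^2 = \h^3\cap\Pi$, with $\Pi = \{\langle\mathbf{r}, C_2\rangle = 0\}$, from $\langle\sigma, C_2\rangle = \langle\sigma, X_2\rangle = 0$.

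In Case~(b) ($\varepsilon = -1$), the ODE $\gamma''' = \kappa_2^2\gamma'$ gives $\gamma(s) = c_0 + c_1 e^{\kappa_2 s} + c_2 e^{-\kappa_2 s}$. Imposing $|\gamma'|^2 = 1$ forces $\langle c_i, c_i\rangle = 0$ and $\langle c_1, c_2\rangle = -1/(2\kappa_2^2)$, so $\{c_1, c_2\}$ is a null pair in $\l^4$. The initial conditions $\gamma(0) = \sigma$, $\gamma'(0) = X_2$, $\gamma''(0) = \kappa_2\,\xi$ determine $c_1, c_2, c_0$ in terms of $\sigma, X_2, \xi$; after $v = \kappa_2 s$ and an appropriate rescaling enforcing the theorem's normalization $\langle C_1, C_2\rangle = -1$, one obtains~\eqref{eq:xuvh3-h32}. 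The constancy of $C_1, C_2$ again follows from Lemma~\ref{lem-nablax2xi-h3}(c)--(d), and the orthogonality relations~\eqref{eq:c1c2h3bis} come from a direct bookkeeping computation using $\langle\sigma, \mathbf{r}\rangle = \langle\sigma,\sigma\rangle = -1$ together with $\langle\sigma, X_1\rangle = \langle\sigma, X_2\rangle = \langle\sigma, \eta\rangle = 0$, which yield $\langle\sigma, \xi\rangle = -1/\kappa_2$. Since $C_1 - C_2$ is a spacelike direction in the null plane $\mathrm{span}(C_1, C_2)$, the curve $\sigma$ is confined to $\h^3\cap\Pi$ with $\Pi = \{\langle\mathbf{r}, C_1 - C_2\rangle = 0\}$, a totally geodesic $\h^2$. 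The ODE~\eqref{eq:sigma-k-h2} for $k = f/2$ is derived exactly as in Case~(a) from Theorem~\ref{teo:bi-conservative-laplacian} with $c=-1$, and the same prime integral is now used with $C<0$, giving $\kappa_2 = (3/4)\sqrt{-C}\,k^{3/4}$.

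The main obstacle is the Lorentzian bookkeeping in Case~(b): one has to pass from the naturally occurring null pair $\{c_1, c_2\}$ (with $\langle c_1, c_2\rangle = -1/(2\kappa_2^2)$) to the normalized pair $\{C_1, C_2\}$ satisfying $\langle C_1, C_2\rangle = -1$, and to verify that the slicing hyperplane $\{\langle\mathbf{r}, C_1 - C_2\rangle = 0\}$ meets $\h^3$ in a genuine totally geodesic $\h^2$. A secondary difficulty is keeping the constants and signs consistent when translating between the arc-length parametrization, the exponential profile $c_1 e^{\kappa_2 s} + c_2 e^{-\kappa_2 s}$, and the affine form $C_1(e^v - 1) + C_2(e^{-v} - 1)$ anchored at $\sigma(u)$.
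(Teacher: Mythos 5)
Your proposal is correct and follows essentially the same route as the paper: reduce to the ODE $\gamma'''=-\varepsilon\kappa_2^2\gamma'$ for the integral curves of $X_2$, integrate it as a circle when $W>0$ and as a null-pair exponential profile when $W<0$, use Lemma~\ref{lem-nablax2xi-h3} to show $C_1,C_2$ are constant, identify $k=f/2$ as a solution of \eqref{eq:sigma-k-h2} via \eqref{eq:f-prime-equation} with $c=-1$, and use the prime integral to express $\kappa_2$ in terms of $k$. The Lorentzian bookkeeping you flag (the normalization $\langle C_1,C_2\rangle=-1$ via $C_1+C_2=\sqrt{2}\,\xi$, $C_1-C_2=\sqrt{2}\,X_2$, and $\langle\sigma,\xi\rangle=-1/\kappa_2$) is exactly what the paper does.
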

\begin{proof}
(a). In this case $W>0$. Define the local orthonormal frame $\{X_1,X_2\}$ as
in \eqref{eq:def-x1-x2}. Let $\gamma(s)$ be an integral curve of $X_2$ parametrized by arc-length. Then
from 
$$
\gamma''(s)=\overline{\nabla}_{\gamma'}\gamma'=\kappa_2(s) \xi(s)
$$
and
$$
\gamma'''(s)=\overline{\nabla}_{\gamma'}\gamma''=-\kappa_2^2(s) \gamma'(s)
$$
it follows that the parametrization $\gamma(s)$ satisfies the following ODE
$$
\gamma'''+\kappa_2^2\gamma'=0\,.
$$
Then, as we have proceeded in the proof of Theorem~\ref{teo:r3general}, we find that, locally, 
$M^2\subset \l^4$ can be parametrized by
\begin{equation}\label{eq:xuvh3}
X(u,v)=\sigma(u)+\frac{1}{\kappa_2(u)}\big(C_1 (\cos v-1) +C_2 \sin v \big),
\end{equation}
where $\sigma(u)$ is and integral curve of $X_1$, $\kappa_2(u)=\kappa_2(\sigma(u))$ is the curvature of the integral curves
of $X_2$ and $C_1,C_2\in\l^4$ are two constant vectors such that
\begin{equation}\label{eq:c1-c2-h3-value}
\langle C_i,C_j\rangle =\delta_{ij}\,,\quad C_1=-\xi(\sigma(u))\,,\quad C_2=X_2(\sigma(u))\,.
\end{equation}
Since 
$$
\langle \sigma(u), C_2\rangle=\langle \sigma(u), X_2(\sigma(u))\rangle=0,
$$
we deduce that $\sigma\subset\Pi$, where $\Pi$ is the hyperspace of $\l^4$  defined by the equation $\langle {\mathbf r},C_2\rangle=0$. Thus $\sigma$ is a curve in $\h^3\cap \Pi=\h^2$, where $\h^2$ is totally geodesic  in $\h^3$. Now, let $k=k(u)$ denote the geodesic curvature of $\sigma$ in $\h^2$. Then, as in the proof of Theorem~\ref{teo:class-bicons-s3}, we find that $k$ is a solution of \eqref{eq:sigma-k-h2}.
In order to conclude, we have to compute $\kappa_2(u)$ as a function of $k(u)$. 
First, by a standard argument, we find that \eqref{eq:sigma-k-h2} has the prime integral
\begin{equation}\label{eq-promeint-kh3}
(k')^2=\frac{16}{9} k^2-16 k^4+Ck^{7/2}\,,\quad C\in\r,\, C>0\,.
\end{equation}
Substituting \eqref{eq-promeint-kh3} in \eqref{eq:defk2h3} and recalling that $k(u)=|\nabla^{\h^3}_{\sigma'}\sigma'|=f(u)/2$, we find
$$
\kappa_2(u)=\frac{3}{4} \sqrt{C} k(u)^{3/4}\,.
$$
Finally, by using the value of $C_1$ in \eqref{eq:c1-c2-h3-value} and that of $\xi$ in \eqref{eq-defxih3}, we get
$$
\langle \sigma(u), C_1 \rangle=\langle \sigma(u), -\xi(\sigma(u)) \rangle=\frac{1}{\kappa_2(u)}=\frac{4}{3 \sqrt{C} k(u)^{3/4}}\,.
$$
(b). In this case $W<0$ and the curve  $\gamma(s)$ satisfies the following ODE
$$
\gamma'''- \kappa_2^2\gamma'=0.
$$
Thus $\gamma(s)=c_o +c_1\, e^{\kappa_2 s} +c_2\, e^{-\kappa_2 s}$, where, since $\langle\gamma',\gamma'\rangle=1$, $c_1$ and $c_2$ are vectorial functions such that $\langle c_1, c_1\rangle =\langle c_2, c_2\rangle=0$ and $\langle c_1, c_2\rangle =-1/(2 \kappa_2^2)$.
It follows that, locally, $M^2\subset \l^4$ can be parametrized by
%\begin{equation}\label{eq:xuvh3bis}
$$
X(u,s)=c_0(u)+c_1(u)\, e^{\kappa_2(u) s} +c_2(u)\, e^{-\kappa_2(u) s},
$$
where $\kappa_2(u)=\kappa_2(\sigma(u))$, $\sigma=\sigma(u)$ being an integral curve of $X_1$.
%\end{equation}
Now, if we perform the change of variables $v=\kappa_2(u) s$ and use the condition $X(u,0)=\sigma(u)$, we obtain that the parametrization of  $M^2$ in $\l^4$ is

\begin{equation}\label{eq:xuvh3bis}
X(u,v)=\sigma(u)+\frac{1}{\sqrt{2} \kappa_2(u)}\big(C_1 (e^v-1) +C_2(e^{-v}-1) \big),
\end{equation}
where  $C_1,C_2\in\r^4$ are two constant vectors such that
$$
\langle C_i,C_i\rangle =0\,,\quad\langle C_1,C_2\rangle =-1\,,\quad C_1+C_2=\sqrt{2}\,\xi(\sigma(u))\,,\quad C_1-C_2=\sqrt{2}\,X_2(\sigma(u)).
$$
%\begin{equation}\label{eq:c1-c2-h3-value}
%\langle C_i,C_j\rangle =\delta_{ij}\,,\quad C_1=-\xi(\sigma(u))\,,\quad C_2=X_2(\sigma(u))\,.
%\end{equation}
Since 
$$
\langle \sigma(u), C_1-C_2\rangle=\sqrt{2}\langle \sigma(u), X_2(\sigma(u))\rangle=0
$$
we deduce that $\sigma\subset\Pi$, where $\Pi$ is the hyperspace of $\l^4$  defined by the equation $\langle {\mathbf r},C_1-C_2\rangle=0$. 
Thus $\sigma$ is a curve in $\h^3\cap \Pi=\h^2$, where $\h^2$ is totally geodesic  in $\h^3$. Now, let $k(u)$ denote the geodesic curvature 
of $\sigma(u)$ in $\h^2$. Then $k=k(u)$ is a solution of \eqref{eq:sigma-k-h2} and, in this case,  we find the same prime integral
 \eqref{eq-promeint-kh3} but with the constant $C<0$.
Next, as we have done in case (a), we get the value of $\kappa_2(u)$ as a function of $k(u)$ as well as $\langle \sigma(u), C_1\rangle$  and $\langle \sigma(u), C_2\rangle$ as indicated in 
\eqref{eq:c1c2h3bis}.
\end{proof}

\begin{remark}
If we assume that 
 $C_1=e_2$ and 
$C_2=e_1$, where $\{e_1,\ldots,e_4\}$ is the canonical basis of $\l^4$, using an argument as in Remark~\ref{remark-existences3}, 
we can check that the curve $\sigma(u)$  in Theorem~\ref{teo:class-bicons-h3} (a) must be of the form
$$
\sigma(u)=(0,\frac{4}{3 \sqrt{C}}\, k(u)^{-3/4},x(u),y(u))\,,
$$
for some functions $x(u)$ and $y(u)$ which are solution of the system
$$
\begin{cases}
x^2-y^2+\dfrac{16}{9 C}\, k^{-3/2}=-1\vspace{3mm}\\
(x')^2-(y')^2=\dfrac{16}{9 C}\,(9k^2-1)\, k^{-3/2}\vspace{3mm}\\
(x'')^2-(y'')^2+\dfrac{16}{9 C}\,(1+3k^2)^2\, k^{-3/2}=k^2-1\,.
\end{cases}
$$
By a direct computation one can show that this system has a solution.  

For the curve $\sigma(u)$ in Theorem~\ref{teo:class-bicons-h3} (b) we have that, choosing $C_1=e_1+e_4$ and  $C_2=e_2+e_4$, 
$$
\sigma(u)=(y(u)-\frac{\sqrt{2}}{2 \kappa_2(u)},y(u) -\frac{\sqrt{2}}{2 \kappa_2(u)},x(u),y(u))\,,
$$
where, in this case, $x(u)$ and $y(u)$ are solution of the system
$$
\begin{cases}
2 \left(y-\frac{\sqrt{2}}{2 \kappa_2}\right)^2 +x^2-y^2=-1\vspace{3mm}\\
2 \left(\left(y-\frac{\sqrt{2}}{2 \kappa_2}\right)'\right)^2 +(x')^2-(y')^2=1\vspace{3mm}\\
2 \left(\left(y-\frac{\sqrt{2}}{2 \kappa_2}\right)''\right)^2 +(x'')^2-(y'')^2=k^2-1\,.
\end{cases}
$$
Again, using the same machineries as in Remark~\ref{remark-existences3}, we can check that this system has a solution.

Moreover, also in this case, as we have noticed in Remark~\ref{remark-existences3}, we can plot a numerical solution of
\eqref{eq:sigma-k-h2}
as shown in Figure~\ref{fig-plot-k-h3}.

\begin{figure}[h!]
\begin{pspicture}(-5,-.5)(5,4.5)
\psset{xunit=1cm,yunit=1cm,linewidth=.03,arrowscale=2}
%\psaxes[labels=none,ticks=none]{->}(0,0)(-4,-0.2)(4,3)
\put(-3,-.1){\includegraphics[width=5.5cm]{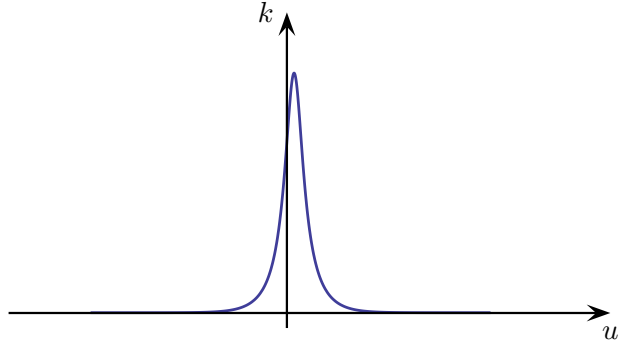}}
\psline{->}(-4,0)(4,0)
\psline{->}(-.3,-.2)(-.3,4)
\uput[270](4,0){$u$}
\uput[180](-.3,4){$k$}
%\put(2.8,5.6){$C=1$}
%\put(2.8,4.6){$C=1.5$}
%\put(2.8,3.8){$C=2$}
%\put(1.9,5.6){$c=0.5$}
%\put(1.3,5.1){$c=0$}
%\put(-0.1,5.8){$p$}
%\psdots[dotsize=0.15](-0.19,6.0)
\end{pspicture}
\caption{\label{fig-plot-k-h3} Plot of  a numerical solution of \eqref{eq:sigma-k-h2}
with $k(0)=1$ and $k'(0)=1$ and integration constant $C=137/9$. Choosing $k(0)=1/4$ and $k'(0)=1/5$
we obtain a negative integration constant $C=-248/225$ (thus a solution to the case (b) of Theorem~\ref{teo:class-bicons-h3})  
but the qualitative behavior of $k$ is similar to the case $C>0$.}

\end{figure}

%\begin{figure}[h!]
%\includegraphics[width=5.5cm]{ksolh3.eps}
%\caption{\label{fig-plot-k-h3} Plot of  a numerical solution of \eqref{eq:sigma-k-h2}.}
%
%\end{figure}

\end{remark}

\begin{remark}
We have the following geometric interpretation of the surfaces described in Theorem~\ref{teo:class-bicons-h3} (a). 
As we have already observed, choosing $C_1=e_2$ and 
$C_2=e_1$, where $\{e_1,\ldots,e_4\}$ is the canonical basis of $\l^4$,  the curve $\sigma(u)$  is  of the form
$$
\sigma(u)=(0,\frac{1}{\kappa_2(u)},x(u),y(u))\,,
$$
and the corresponding biconservative surface is parametrized by
$$
X(u,v)=(\frac{1}{\kappa_2(u)} \sin v, \frac{1}{\kappa_2(u)} \cos v, x(u), y(u))\,.
$$
Therefore, the surface  is clearly given by the action, on the curve $\sigma$, of the group of isometries of $\l^4$ 
which leaves the plane $P^2$ generated by $e_3$ and $e_4$ fixed. These surfaces, following the terminology 
given by do Carmo and Dajczer (see \cite{DoCDa83}), are called rotational surfaces of spherical type. In fact,  
the metric of $\l^4$ restricted on  $P^2$ is Lorentzian and when this happens, as described in \cite[pag. 688]{DoCDa83}, the orbits are circles.
\end{remark}

\end{document}